\documentclass[a4paper, UKenglish]{article}
\usepackage[UKenglish]{babel}
\usepackage{amsmath,amsthm,amssymb,amsfonts}
\usepackage{tikz}
\usepackage{tikz-cd}
\usetikzlibrary{arrows}
\usepackage{ stmaryrd }
\usepackage{extarrows}
\usepackage{enumerate}
\usepackage{enumitem}
\usepackage{hyperref}
\usepackage{url}

\newcommand{\Z}{\mathbb{Z}}
\newcommand{\Q}{\mathbb{Q}}
\newcommand{\R}{\mathbb{R}}
\newcommand{\C}{\mathbb{C}}

\newtheorem{theorem}{Theorem}[section]

\newtheorem{lemma}[theorem]{Lemma}

\theoremstyle{definition}
\newtheorem{definition}[theorem]{Definition}

\newtheorem{remark}[theorem]{Remark}

\title{ Integral points on
affine quadric surfaces}
\author{Tim Santens}
\date{}
\begin{document}
\maketitle
\begin{abstract}
 It is well-known that the Hasse principle holds for quadric hypersurfaces. The Hasse principle fails for integral points on smooth quadric hypersurfaces of dimension 2 but the failure can be completely explained by the Brauer-Manin obstruction. We investigate how often the family of quadric hypersurfaces $ax^2 + by^2 +cz^2 = n$ has a Brauer-Manin obstruction. We improve previous bounds of Mitankin.
\end{abstract}
\section{Introduction}
One of the oldest questions in number theory is whether a particular  polynomial equation has a solution in the integers or in the rational numbers. The general problem of finding an algorithm which can decide for every polynomial $f \in \Z[X_1, \cdots , X_n]$ whether it has an integral zero is known as Hilbert's 10th problem. It was shown to be impossible in the second half of the twentieth century by the combined work of multiple authors. The analogous question for rational zeros is still open but is also expected to be unsolvable. A slight generalization in which we allow systems of polynomial equations can be restated in a more modern terminology as the question whether for a $\Q$-variety $X$ and an integral model $\mathcal{X}$ there are any rational points $X(\Q) = \mathcal{X}(\Q)$ or integral points $\mathcal{X}(\Z)$. Note that the second question is only interesting if $\mathcal{X}$ is not proper (and thus not projective) since if it were proper, then because of the valuative criterion of properness $\mathcal{X}(\Z) = \mathcal{X}(\Q)$. Necessary conditions for the existence of rational or integral points are that $X(\Q_p) \neq \emptyset$ for all prime numbers $p$ and $X(\R) \neq \emptyset$ , respectively that $\mathcal{X}(\Z_p) \neq \emptyset$ for all prime numbers $p$ and $\mathcal{X}(\R) \neq 0$. Here $\Q_p$ are the $p$-adic numbers and $\Z_p$ are the $p$-adic integers. If these conditions are also sufficient we say that $X$ satisfies the Hasse principle, respectively that $\mathcal{X}$ satisfies the integral Hasse principle.

The simplest example of varieties, those defined by a system of linear equations, satisfy the Hasse principle and the integral Hasse principle by linear algebra and Euclid's algorithm. The first non-trivial example are quadrics. Both projective and affine quadrics satisfy the Hasse principle  \cite[\S 4: Theorem 8]{serre2012course}, this was proved by Minkowski.
Sadly, this does not generalize to their integral models. The situation depends on the dimension of the variety. We may assume that the associated quadratic form is non-degenerate, equivalently that the corresponding variety is smooth. If its dimension is greater or equal to 3 and the set of real solutions is unbounded, then they satisfy the Hasse principle, \cite[Theorem 6.1]{colliot2009brauer}. This was originally proven by Kneser.
Note that the unboundedness assumption is necessary due to counterexamples like $4x^2 + 4y^2 + 4z^2 + 9t^2 = 1$. This theorem fails when the rank of the quadratic form is less than 4. In the case when the rank is 3 the failure of the integral Hasse principle can be completely explained by the Brauer-Manin obstruction \cite[Theorem 6.3]{colliot2009brauer} which was proven by Colliot-Thélène and Xu.
The dimension 1 case is the rich subject of integers represented by binary quadratic forms which lies outside the scope of this article.

A natural question is then: what is the amount of surfaces that actually have a Brauer-Manin obstruction? This question was investigated in work by Mitankin \cite{mitankin2017}. To be precise, for a fixed non-zero integer $n$ consider the family $\mathcal{F}_n$ of surfaces $X_{a,b,c}: ax^2 +by^2 +cz^2 = n$ where $a,b,c \in \Z$ such that $ax^2 + by^2 +cz^2$ is indefinite and non-degenerate. The first question is how often this family has integral solutions everywhere locally. To study this one introduces the height function $H(a,b,c) = \max(| a| , | b| , | c| )$ and considers the following quantity as $B > 0$ varies:
\begin{equation*}
    N_{\text{loc}}(B) = | \{X_{a,b,c} \in \mathcal{F}_n : H(a,b,c) \leq B,  X_{a,b,c}(\mathbb{A}_{\Z}) \neq \emptyset\}|.
\end{equation*}
Then Mitankin proves \cite[Theorem 1.1]{mitankin2017} that there exist a non-zero constant $c_n$ such that
$N_{\text{loc}}(B)  \sim c_n B^3$.     
So a positive proportion of such surfaces have integral solutions everywhere locally. The second quantity they consider is
\begin{equation*}
    N_{\text{Br}}(B) = | \{X_{a,b,c} \in \mathcal{F}_n: H(a,b,c) \leq B,  X_{a,b,c}(\mathbb{A}_{\Z}) \neq \emptyset \text{ and } X_{a,b,c}(\Z) = \emptyset \}|.
\end{equation*}
For this quantity it is proved that \cite[Theorem 1.2, Corollary 1.4]{mitankin2017}:
\begin{equation*}
    B^{\frac{3}{2}} (\log B)^{-\frac{1}{2}} \ll_n N_{\text{Br}}(B) \ll_n B^{\frac{3}{2}} (\log B)^3.
\end{equation*}
So in particular $0 \%$ of these surfaces have a Brauer-Manin obstruction. In this thesis we improve both of these bounds.
\begin{theorem}
The following bounds hold
\begin{equation*}
    B^{\frac{3}{2}} (\log B)^{\frac{1}{2}} \ll_n N_{\emph{Br}}(B) \ll_n B^{\frac{3}{2}} (\log B)^\frac{3}{2}.
\end{equation*}
\label{Main result}
\end{theorem}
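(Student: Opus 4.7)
The plan is to refine Mitankin's strategy: make the Brauer--Manin obstruction condition on $(a, b, c)$ completely explicit in terms of congruences and Legendre symbol conditions on the prime divisors of $abc$, and then count such triples using sharper analytic number theory.

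First I would recall from Colliot-Th\'el\`ene--Xu that $\mathrm{Br}(X_{a,b,c})/\mathrm{Br}(\Q)$ is a finite $2$-torsion group generated by quaternion classes of the form $(-ab, n)$, $(-ac, n)$ and $(-bc, n)$ modulo relations. A Brauer--Manin obstruction arises iff $X_{a,b,c}(\mathbb{A}_{\Z}) \neq \emptyset$ but for some class $\alpha$ the sum of local invariants $\sum_v \mathrm{inv}_v \alpha(P_v)$ is forced to be a nonzero element of $\Q/\Z$ on every adelic integral point. Using quadratic reciprocity and the explicit formulas for invariants of quaternion algebras over $\Q_p$, this becomes a concrete system of congruences modulo $8n$ together with Legendre symbol equalities between the odd prime divisors of $a$, $b$, and $c$.

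For the upper bound, I would decompose each of $a$, $b$, $c$ into its $2n$-part and its coprime-to-$2n$ part, and bound $N_{\mathrm{Br}}(B)$ by a sum (over the finitely many potential obstruction classes) of character sums whose integrand is of the shape $\mu^2(abc) \chi(a, b, c)$, where $\chi$ is a product of Legendre symbols. A careful analysis via the Landau--Selberg--Delange method combined with a multi-variable sieve should then yield the target $B^{3/2}(\log B)^{3/2}$. The improvement of $(\log B)^{3/2}$ over Mitankin arises from observing that several of the Legendre symbol relations required for the obstruction are already implied by the local solubility conditions; carrying out a careful inclusion--exclusion to avoid double-counting these constraints removes the redundant logarithmic factors.

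For the lower bound, I would exhibit an explicit subfamily producing obstructions. The family consists of triples $(a, b, c)$ whose squarefree parts lie in a sublattice cut out by the Legendre symbol relations required for a Brauer obstruction, with one prime divisor of (say) $c$ forced into a specific residue class modulo $8n$ so that the invariant of a fixed class $(-ab, n)$ equals $\tfrac{1}{2}$ at that prime and vanishes at all other places, while local solubility is simultaneously guaranteed. A Selberg--Delange asymptotic for the count of such triples, where the relevant multiplicative function has mean $\tfrac{1}{2}$, then yields the claimed $B^{3/2}(\log B)^{1/2}$.

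The main obstacle will be the upper bound: the character sums produced by different Brauer classes are correlated through the reciprocity relations, so they do not factor cleanly across the variables $a$, $b$, $c$. Keeping the exponent of $\log B$ pinned at exactly $3/2$, rather than allowing it to drift up by one or two per obstruction class, requires exploiting the orthogonality between the solubility characters and the obstruction characters uniformly in the obstruction class, which is the delicate technical step of the proof.
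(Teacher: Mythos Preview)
Your upper-bound plan has a genuine gap: it does not identify the structural constraint that produces the exponent $3/2$ on $B$. The paper does not obtain $B^{3/2}$ from orthogonality of character sums; it comes from Mitankin's lemma that if some odd prime $p$ satisfies $v_p(a)$ odd and $p\nmid bcn$ then there is no integral Brauer--Manin obstruction. This forces each of $a,b,c$ to be, up to primes dividing $2n$ and primes shared with the other two coefficients, a perfect square. The paper writes
\[
a=v_1\,u_{12}u_{13}\,w_{12}w_{13}w_{21}^2w_{31}^2\,a_1^2,\qquad\text{etc.},
\]
and the sum over the free square parts $a_1,b_1,c_1\le \sqrt{B}$ is what yields $B^{3/2}$. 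Your sketch, which bounds by a sum of $\mu^2(abc)\chi(a,b,c)$ over squarefree $a,b,c$ up to $B$, would at best give $B^3$ divided by powers of $\log B$, not $B^{3/2}$. Moreover, the improvement from $(\log B)^3$ to $(\log B)^{3/2}$ in the paper is not obtained by removing ``redundant'' Legendre-symbol constraints implied by local solubility; it comes from a sharper analysis of the triple sum
\[
S(X,Y,Z;k,l,m)=\sum_{u_{12},u_{13},u_{23}}\delta(u_{23};ku_{12}u_{13})\,\delta(u_{13};lu_{12}u_{23})\,\delta(u_{12};mu_{13}u_{23}),
\]
carried out by opening each $\delta$ via $\delta(u;v)=\mathbf{1}_{(u,2v)=1}\,\mu^2(u)\tau(u)^{-1}\sum_{d\mid u}(\tfrac{v}{d})$, splitting into ranges, and applying a bilinear Jacobi-symbol estimate of Friedlander--Iwaniec type for the large ranges together with Selberg--Delange for the short ones. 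That double-large-sum cancellation is the mechanism you are missing.

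Two smaller points. First, $\mathrm{Br}(X_{a,b,c})/\mathrm{Br}(\Q)$ is cyclic of order $2$ (when $-\mathrm{disc}(q)\cdot n$ is not a square), with an explicit generator produced from a tangent plane to the projective closure; the quaternion classes you list do not have second entry $n$ in general. Second, your lower-bound outline is in the right spirit, but the paper's construction is more concrete: it fixes a prime $q\equiv 1\pmod 8$ and considers $aq^2c^2x^2-ad^2y^2+e^2qz^2=1$, showing the obstruction occurs exactly when $a$ is a quadratic residue but not a fourth power modulo $q$; the count of admissible $(a,c,d,e)$ by Selberg--Delange then gives the asserted $B^{3/2}(\log B)^{1/2}$.
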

\subsubsection*{Acknowledgements.}
I would like to thank my thesis advisor Vladimir Mitankin for providing the topic of this thesis, answering questions related to it and the helpful comments provided while writing it. I am also thankful of Valentin Blomer for his feedback on the thesis this paper is based on. This paper is a result of the author's Master's thesis completed in the context of the Master of mathematics at the university of Bonn.
\subsubsection*{Structure}
This paper is organized as follows, in the first section we give an introduction to the integral Brauer-Manin obstruction and in particular describe the case of integral affine quadrics. In the second section we give a proof of Theorem \ref{Main result}, first of the lower and then the upper bound.

\section{Brauer-Manin obstruction}
In this section we will describe the Brauer-Manin obstruction to the Hasse principle. To do this we will define a certain subset of $X(\mathbb{A}_K)$, the Brauer-Manin set $X(\mathbb{A}_K)^{\text{Br}}$, which contains $X(K)$. We first recall the definition of the Brauer group of a scheme: $\text{Br}(X) = H^2_{\text{\'{e}t}}(X, \mathbb{G}_m)$. The cohomology on the right-hand side is \'{e}tale cohomology.

\subsection{The general Brauer-Manin obstruction.}
For this section let $K$ be a number field, $\Omega_K$ the set of places of $K$ and for $v \in \Omega_K$ we let $K_v$ be the completion of $K$ with respect to the place $v$. We recall the following facts from class field theory. For every place $v \in \Omega_K$ there exists an injective invariant map $\text{inv}_v:\text{Br}(K_v) \to \Q/\Z$ which is an isomorphism if $v$ is finite, has image $\{0, \frac{1}{2}\}$ if $v$ is real and image $\{0\}$ if $v$ is complex \cite[Theorem~7.1.4]{neukirch2013cohomology}. These invariant maps also allow us to compute the Brauer group of $K$ via the following exact sequence \cite[Theorem~8.1.17]{neukirch2013cohomology} which is known as the Brauer-Hasse-Noether theorem.

$$0 \to \text{Br}(K) \to \bigoplus_{v \in \Omega_K}\text{Br}(K_v) \xrightarrow{\sum_v \text{inv}_v} \Q/\Z \to 0.$$

Now let $X$ be a $K$-variety, $\alpha \in \text{Br}(X)$ and $(x_v)_v \in X(\mathbb{A}_K)$ an adelic point. We would like to have a condition which detects whether $(x_v)_v$ comes from a rational point. For this we note that if $(x_v)_v$ came from a rational point $x \in X(\Q)$, then the tuple of Brauer elements $(\alpha(x_v))_v$ would have to come from the Brauer element $\alpha(x)$ via the inclusions $K \to K_v$. By the Brauer-Hasse-Noether exact sequence such a thing can only happen if $\sum_{v \in \Omega_v} \text{inv}_v(\alpha(x_v)) = 0$. Doing the same for every $\alpha \in \text{Br}(X)$ shows that the set of rational points on $X$ is contained in the left kernel of the bilinear pairing 
\begin{equation}
    \label{Brauer-Manin pairing}
    \begin{split}
    X(\mathbb{A}_K) \times \text{Br}(X) &\to \Q/\Z \\ 
    ((x_v)_v, \alpha) &\mapsto \sum_{v \in \Omega_v} \text{inv}_v(\alpha(x_v)).
    \end{split}
\end{equation} This bilinear pairing might be undefined if $\alpha(x_v) \neq 0$ for more than a finite amount of places of $K$, \cite[Proposition 8.2.1]{poonen2017rational} guarantees that this does not happen. We call the left kernel of the bilinear pairing $(\ref{Brauer-Manin pairing})$ the Brauer-Manin set and denote it by $X(\mathbb{A}_K)^{\text{Br}}$.
By the previous discussion $X(K)$ injects into $X(\mathbb{A}_K)^{\text{Br}}$ so in particular if $X(\mathbb{A}_K) \neq \emptyset$ but $X(\mathbb{A}_K)^{\text{Br}} = \emptyset$, then this argument shows that $X$ has no rational points and thus fails the Hasse principle. We say that $X$ has a Brauer-Manin obstruction to the Hasse principle.

Let now $\mathcal{X}$ be an integral model of $X$, it is separated so we can identify $\mathcal{X}(\mathcal{O}_{K_v})$ with a subset of $\mathcal{X}(K_v) = X(K_v)$ and thus form a similar obstruction for integral points by considering the bilinear pairing
\begin{equation}
    \mathcal{X}(\mathbb{A}_{\mathcal{O}_K}) \times \text{Br}(X) \to \Q/\Z: ((x_v)_v, \alpha) \to \sum_{v \in \Omega_v} \text{inv}_v(\alpha(x_v)).
    \label{Integral Brauer-Manin pairing}
\end{equation} 
The integral Brauer-Manin set is the left kernel of the bilinear pairing (\ref{Integral Brauer-Manin pairing})  and we denote it by $\mathcal{X}(\mathbb{A}_{\mathcal{O}_K})^{\text{Br}}$. It is clear from the definitions that $\mathcal{X}(\mathbb{A}_{\mathcal{O}_K}) ^{\text{Br}} =  \mathcal{X}(\mathbb{A}_{\mathcal{O}_K})  \cap X(\mathbb{A}_K)^{\text{Br}}$. If $\mathcal{X}(\mathbb{A}_{\mathcal{O}_K})  \neq \emptyset$ but $\mathcal{X}(\mathbb{A}_{\mathcal{O}_K}) ^{\text{Br}} = \emptyset$ we say that $\mathcal{X}$ has a Brauer-Manin obstruction to the integral Hasse principle. The constant Brauer elements, i.e the elements which lie in the image of the pullback $\text{Br}(K) \to \text{Br}(X)$ induced by the structure morphism $X \to \text{Spec}(K)$, are contained in the right kernel of these bilinear pairings. Indeed, if we identify $\alpha \in \text{Br}(K)$ with its image in $\text{Br}(X)$, then for all $x_v \in X(K_v)$ we have $\text{inv}_v(\alpha(x_v)) = \text{inv}_v(\alpha)$. To compute the Brauer-Manin set it thus suffices to only work with representatives of $\text{Br}(X)/\text{Br}(K)$. Another useful fact for computations is that this bilinear pairing is locally constant (and thus continuous) with respect to the topology on $X(\mathbb{A}_{K})$ \cite[Corollary 8.2.11]{poonen2017rational}.
\subsection{The Brauer-Manin obstruction for affine integral quadric surfaces}
\label{procedure integral affine quadric}
We now come to the case of interest for this thesis. Let $q$ be a non-degenerate indefinite integral quadratic form of rank $3$ and $n$ a non-zero integer. The question we want to answer is whether there exist integers $x,y,z \in \Z$ such that $q(x,y,z) = n$. Let $X$ be the affine surface defined over $\Q$ by this equation and let $\mathcal{X}$ be the obvious integral model of it. It is then shown by Colliot-Thélène and Xu in \cite[Theorem 6.3]{colliot2009brauer} that the Brauer-Manin obstruction is the only obstruction to the integral Hasse principle, i.e. if $\mathcal{X}(\mathbb{A}_\Z)^{\text{Br}} \neq \emptyset$ then $\mathcal{X}(\Z) \neq 0.$ It is also shown that if $X(\Q) \neq \emptyset$ and $d= -\text{disc}(q)n$ is not a square, then $\text{Br}(X)/\text{Br}(\Q) \cong \Z/ 2 \Z$ and if $d$ is a square then $\text{Br}(X)/\text{Br}(\Q)= 0$. An explicit algorithm to find a generator of this group is given in section 5.8 of \cite{colliot2009brauer}. We now give a description of this algorithm.

Consider first the projectivization $\overline{X}$ of $X$, concretely $\overline{X} \subseteq \mathbb{P}^3$ is the projective surface defined by $q(x,y,z) = nt^2.$ Then choose a $\Q$-point
$M$ on $\overline{X}$. Let $l_1 = l_1(x,y,z,t)$ be the linear form defining the tangent plane of $\overline{X}$ at $M$. Then by \cite[\S IV: Proposition 3']{serre2012course} there exist linearly independent linear forms $l_2,l_3,l_4$ and $c \in \Q^*$ such that $$q(x,y,z) - nt^2 = l_1 l_2 + c(l_3^2 - dl_4^2).$$ Conversely, if we have such linear forms then $l_1$ defines a plane tangent to $\overline{X}$. Consider the quaternion algebra $\alpha = (\frac{l_1}{t},d) \in \text{Br}(\Q(\overline{X})) =\text{Br}(\Q(X)).$ By the identity in the $l_i$ we get in $\text{Br}(\Q(X))$ that 
\begin{equation*}
    \alpha = (\frac{l_2}{t}, d) (\frac{l_1 l_2}{t^2},d) =  (\frac{l_2}{t},d) (c,d) (\frac{l_3^2 - dl_4^2}{t^2},d) =  (\frac{l_2}{t},d) (c,d).
\end{equation*}
So $\alpha$ is defined on $U_1 = \{t l_1 \neq 0\}$ and on $U_2 = \{t l_2 \neq 0\}$. By Grothendieck purity \cite{grothendieck1968groupe} this implies that $\alpha$ comes from the Brauer group of $\{t~\neq~0\} = X$ since $\{l_1 = 0 = l_2\}$ is at least of codimension 2.
\section{The proof of Theorem \ref{Main result}}
We recall that the statement of theorem \ref{Main result} is the following inequality:
\begin{equation*}
    B^{\frac{3}{2}} (\log B)^{\frac{1}{2}} \ll_n N_{\emph{Br}}(B) \ll_n B^{\frac{3}{2}} (\log B)^\frac{3}{2}.
\end{equation*}
\subsection{The lower bound} It suffices to do the case $n=1$ since the surfaces $ax^2 + by^2 +cz^2 = 1$ and $anx^2 + bny^2 +cnz^2 = n$ are the same. We first construct a family with a Brauer-Manin obstruction. For this fix a prime $q \equiv 1 \bmod{8}$, e.g. $q = 17$. For $a,c,d,e \in \Z \setminus \{0\}$ with $c,d,e$ pairwise coprime consider the family of integral surfaces $Y_{a;c,d,e}: aq^2 c^2x^2 -a d^2y^2 + e^2 q z^2 = 1$. We start with a lemma showing when this family has a local solution everywhere, clearly we always have $Y_{a;c,d,e}(\R) \neq \emptyset$. From now on $p$ is always a prime number.

\begin{lemma}
$Y_{a;c,d,e}(\Z_p) \neq \emptyset$ for all primes $p$ if and only if $(a,eq) = 1$, $(d,q) = 1$ and for all odd primes $p\mid  a$ we have $(\frac{q}{p}) = 1$.
\end{lemma}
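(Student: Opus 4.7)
The plan is to verify $\mathbb{Z}_p$-solubility of $Y_{a;c,d,e}$ one prime at a time. Necessity of each listed condition will follow from an explicit mod-$p$ obstruction, and sufficiency by producing an explicit $p$-adic point at every $p$, using Hensel lifting where convenient.

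For necessity, I would reduce $aq^2c^2x^2 - ad^2y^2 + e^2qz^2 = 1$ modulo suitable primes. If some prime $p$ divides $\gcd(a, eq)$, then every term on the left vanishes mod $p$, contradicting the right; this forces $(a, eq) = 1$. The same reduction at $p = q$ forces $(d, q) = 1$. For odd $p \mid a$ (whence $p \nmid eq$), the reduction collapses to $e^2qz^2 \equiv 1 \pmod p$, which is solvable only when $q$ is a square modulo $p$, yielding $(\frac{q}{p}) = 1$.

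For sufficiency I assume all three conditions and distinguish cases. When $p$ is odd and $p \nmid 2acdeq$, the equation cuts out a smooth affine conic over $\mathbb{F}_p$; a non-degenerate ternary quadratic form over $\mathbb{F}_p$ is universal, and Hensel lifts. When $p$ is odd and $p \mid a$, setting $x = y = 0$ reduces to $e^2qz^2 = 1$, and the hypothesis $(\frac{q}{p}) = 1$ makes $(e^2q)^{-1}$ a square in $\mathbb{Z}_p^*$. When $p$ is odd, $p \nmid a$, and $p$ divides exactly one of $c, d, e$ (with $p \neq q$ if $p \mid e$), the equation mod $p$ is a non-degenerate binary form, which is universal over $\mathbb{F}_p$ by a pigeonhole argument (two subsets of $\mathbb{F}_p$ of size $(p+1)/2$ must intersect), and Hensel lifts again.

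The delicate primes are $p = q$ and $p = 2$, and I expect these to be the main technical obstacle since both rely crucially on the hypothesis $q \equiv 1 \pmod 8$. At $p = q$, taking $x = z = 0$ reduces to $y^2 = -(ad^2)^{-1}$ in $\mathbb{Z}_q$, which requires $-a$ to be a quadratic residue mod $q$. Since $q \equiv 1 \pmod 4$ gives $(\frac{-1}{q}) = 1$, this amounts to $(\frac{a}{q}) = 1$; by quadratic reciprocity (valid as $q \equiv 1 \pmod 4$) this factors as $\prod_{\text{odd }p_i \mid a}(\frac{q}{p_i})^{v_{p_i}(a)} \cdot (\frac{2}{q})^{v_2(a)}$, where the first product is $1$ by the hypothesis on odd divisors of $a$ and $(\frac{2}{q}) = 1$ because $q \equiv 1 \pmod 8$. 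At $p = 2$, if $e$ is odd (automatic when $2 \mid a$ because then $(a,eq)=1$ forces $e, q$ odd) I set $x = y = 0$ and use that $e^2q \equiv 1 \pmod 8$ is a square in $\mathbb{Z}_2^*$; if $e$ is even then $a, c, d$ are all odd (by pairwise coprimality and $(a,eq)=1$), and setting $z = 0$ reduces to $a(x^2 - y^2) \equiv 1 \pmod 8$, solvable for each odd residue of $a$ since $x^2 - y^2$ attains every odd class modulo $8$, after which Hensel lifts via the odd partial derivative.
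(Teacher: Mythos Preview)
Your proof is correct and follows essentially the same approach as the paper's: necessity by reduction modulo the relevant prime, and sufficiency by a case split over primes with explicit $p$-adic points (via $x=y=0$ or $x=z=0$) at the bad primes and Hensel lifting elsewhere. The only cosmetic difference is that the paper handles all primes $p\nmid 2aq$ in a single stroke (``smooth $\mathbb{F}_p$-point lifts''), whereas you further separate the sub-case where $p$ divides one of $c,d,e$ into a binary-form argument; both are fine. One small wording issue: at $p=2$ the relevant partial derivative is never literally odd, but has $2$-adic valuation exactly $1$ (since $a,q,c,d$ and one of $x_0,y_0$ are odd), which is what the strong form of Hensel needs.
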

\begin{proof}
The only if part is clear by looking modulo $p$, the other part we do by case analysis.
\begin{itemize}
    \item If $p \nmid 2aq$, then the associated equation has a smooth point modulo $p$ which lifts to a $\Z_p$-point by Hensel's lemma.
    \item If $p\mid a$ and  if $p$ is odd, then by assumption $q$ is a non-zero square modulo $p$ which implies we can write $q = \alpha^{-2}$ for $\alpha \in \Z_p$. Similarly, if $p = 2$ and $2 \nmid  e$, then since $q \equiv 1 \pmod{8}$ we can also write $q = \alpha^{-2}$ for $\alpha \in \Z_p^{*}$. In both cases $e$ must be invertible in $\Z_p$ since $(a,e) = 1$. So $(0,0,\alpha e^{-1}) \in Y_{a;c,d,e}(\Z_p)$.
    \item If $p=q$, then we can factorize $a = \prod_{i} p_i^{\alpha_i}$ where the $p_i$ are prime numbers. Then $(\frac{a}{q}) = \prod_i (\frac{p_i}{q})^{\alpha_i} = 1$. Indeed, by our assumption and quadratic reciprocity $(\frac{p_i}{q}) = (\frac{q}{p_i}) = 1$ if $p_i$ is odd. If $p_i = 2$ then also $(\frac{2}{q}) = 1$ since $q \equiv 1 \pmod{8}.$ So $a$ is the square of a unit in $\Z_q^{*}$ which gives a point in $Y_{a;c,d,e}$ by setting $x=z=0$.
    \item If $p=2$ but $2\mid e$, then by assumption $2 \nmid  a,c,d$. A direct computation shows that the quadratic form $x'^2 -y'^2$ takes all values in $ (\Z / 8\Z)^*$, letting it take the value $a^{-1}$ and setting $z=0, x' = qcx, y' = dy$ gives a solution to $aq^2c^2 x^2 -ad^2y^2 +e^2qz^2=1$ modulo $8$ which lifts to $\Z_2$ by Hensel's lemma.
\end{itemize}
\end{proof}

From now on let $a,c,d,e$ be as in the above lemma. We want to know when $Y_{a;c,d,e}$ has no integral point, by \cite[Theorem 6.3]{colliot2009brauer} this only happens if there is a Brauer-Manin obstruction. In this case $\text{Br}(Y_{a;c,d,e}) / \text{Br}(\Q) \cong \Z/ 2 \Z$ since $-(aq^2c^2)(-ad^2)(e^2q) = q \in \Q^* / \Q^{*2}$ is not a rational square as in section \ref{procedure integral affine quadric}. We use the procedure in that section to find a representative of the generator of this group. On the projectivization of $Y_{a;c,d,e}$ given by the equation $aq^2c^2x^2 -ad^2y^2 + qe^2 z^2 =t^2$ we have a rational point $(d, -qc,0,0)$. The tangent plane to this point is defined by the equation $2aq^2c^2dx + 2ad^2qcy = 0$ so a representative for the generator of the group is given by $(qcx + dy,q)$.

We now need to find what the values of this Brauer element are when evaluated at points of $Y_{a;c,d,e}(\Z_p)$. Since the evaluation function is continuous we can restrict our attention to the dense open subset $U=\{qcx+dy \neq 0  \}$. That is, we will only consider points in $Y_{a;c,d,e}(\Z_p) \cap U(\Q_p)$. We split this computation into the following cases:
\begin{itemize}
    \item If $p=\infty$ or $2$, then $(qcx+dy,q)_{p} = 1$ since $q$ is a square in $\R$ and $\Z_{2}$.
    \item If $p \nmid  2q$ then the following equality follows from the computation of the Hilbert symbol \cite[\S III: Theorem 1]{serre2012course}.
    $$(qcx+dy,q)_p = (\frac{q}{p})^{v_p(qcx+dy)}.$$
    We may assume that $(\frac{q}{p}) = -1$ since the other case is trivial, thus $(a,p) = 1.$ If $p \mid qcx + dy$, then $p \mid  q^2c^2x^2 - d^2y^2$ so by looking at the defining equation modulo $p$ we see that $e^2qz^2 \equiv 1 \pmod{p}$ which contradicts that $(\frac{q}{p}) = -1$.
    \item If $p=q$, then by looking at the defining equation modulo $q$ we get $ad^2y^2 \equiv 1 \pmod{q}$ which shows that $y$ is non-zero modulo $q$. Then $(qcx+dy,q)_q = (\frac{dy}{q})$ because of the computation of the Hilbert symbol. So it is equal to $1$ if and only if $dy$ is a square modulo $q$. Again looking at the defining equation modulo $q$ we see that this happens if and only if $a$ is a non-zero fourth power modulo $q$.
    
\end{itemize}

Combining all this we get that $Y_{a;c,d,e}(\Z) = \emptyset$ if and only if $a$ is not a fourth power modulo $q$. On the other hand we already assumed that $a$ was a square modulo $q$ so the residue of $a$ modulo $q$ has to be an element of 
\begin{equation}
    S = (\Z / q \Z)^{*2} \setminus (\Z / q \Z)^{*4}
    \label{Definition S}.
\end{equation}
The group isomorphism $(\Z / q\Z)^* \cong \Z / (q-1) \Z$ and the fact that \\$q \equiv 1 \pmod{8}$ imply that the set $S$ is non-empty and has size $\frac{q-1}{4}$.

We now want to count the amount of such equations with coefficients smaller than $B$, denote this $N_{\text{Br}}'(B)$, i.e. the quantity $$  |\{Y_{a;c,d,e} :  H(aq^2c^2,ad^2,qe^2) \leq B,  Y_{a;c,d,e}(\mathbb{A}_{\Z}) \neq \emptyset \text{ and } Y_{a;c,d,e}(\Z) = \emptyset \}| $$
We will first introduce some notation to encode the conditions above.
First of all we will denote the indicator function of $S$ by $1_{a \in S}$, we can encode this in the standard way as a sum of characters
\begin{equation}
    1_{a\in S} =  \frac{1}{q-1} \sum_{s \in S} \sum_{\chi \text{ mod } q} \chi(s) \chi(a).
    \label{S as sum of characters.}
\end{equation}
Note that we used that $s \in S \Longleftrightarrow s^{-1} \in S$ to simplify the notation.
We will also use $\alpha(a)$ to denote the indicator function of the set $\{a \in \Z: p \mid a \Rightarrow (\frac{p}{q}) = 1\}$.
Similarly we use the notation $\beta(c,d,e)$ for the indicator function of the condition $c,d,e$ pairwise coprime.
 
Since the sign of $a$ is immaterial we have
\begin{equation}
    N_{\text{Br}}'(B) =  2\sum_{a \leq \frac{B}{q^2}} \alpha(a) 1_{a \in S} \sum_{\substack{c \leq \frac{\sqrt{B}}{q\sqrt{a}}}}  \sum_{\substack{d \leq \sqrt{\frac{B}{a}} \\ (d,q) = 1}} \sum_{\substack{e \leq \sqrt{\frac{B}{q}} \\ (e,a) = 1}} \beta(c,d,e).
    \label{NqBr definition}
\end{equation}
To remove the condition $(a,e) = 1$ we can use that its indicator function is given by $\sum_{f\mid (a,e)} \mu(f).$ We can then swap the order of the summation so (\ref{NqBr definition}) turns into
\begin{equation}
    N_{\text{Br}}'(B) = 2 \sum_{f \leq \sqrt{\frac{B}{q}} } \sum_{a \leq \frac{B}{q^2 f}}  \alpha(af) 1_{af \in S} \sum_{\substack{c \leq \frac{\sqrt{B}}{q\sqrt{af}} }}  \sum_{\substack{d \leq \sqrt{\frac{B}{af}} \\ (d,q) = 1}} \sum_{\substack{e \leq \frac{\sqrt{B}}{\sqrt{q}f}}} \beta(c,d,fe).
    \label{Lower bound eq1}
\end{equation}
We will denote the triple sum over $c,d,e$ by $V(B,a,f)$. Note that by definition $\beta(c,d,fe) = \beta(c,d,e)$ if $(f,cd) = 1$ and equal to $0$ otherwise. So we can also write this quantity as 
\begin{equation}
    V(B,a,f) = \sum_{\substack{c \leq \frac{\sqrt{B}}{q\sqrt{af}} \\ (c,f) = 1}}  \sum_{\substack{d \leq \sqrt{\frac{B}{af}} \\ (d,qf) = 1}} \sum_{\substack{e \leq \frac{\sqrt{B}}{\sqrt{q}f}}} \beta(c,d, e).
\end{equation}
It also follows from the definition of $\alpha$ that $\alpha(af) = \alpha(a) \alpha(f)$. Using this and (\ref{S as sum of characters.}) we obtain
\begin{equation*}
     N_{\text{Br}}'(B) =  \frac{2}{q-1} \sum_{s \in S} \sum_{\chi \text{ mod } q} \chi(s)  \sum_{f \leq \sqrt{\frac{B}{q}} } \alpha(f) \chi(f) \sum_{a \leq \frac{B}{q^2 f}}  \alpha(a) \chi(a) V( B,a, f).
\end{equation*}

We now introduce the following intermediary quantities for which we will find asympotic formulas one by one. First of all the sum over $a$ is denoted by 
\begin{equation}
        W_\chi(B,f) = \sum_{a \leq \frac{B}{q^2 f} } \alpha(a)  \chi(a) V(B,a,f).
        \label{Definition of W_chi}
\end{equation}
We will also consider the sum over $f$:
\begin{equation}
    U_\chi(B) = \sum_{f \leq \sqrt{\frac{B}{q}} } \alpha(f) \chi(f) W_\chi(B,f).
    \label{Definition of U_chi}
\end{equation}
With this notation the quantity which we have to compute is 
\begin{equation}
    N_{\text{Br}}'(B) = \frac{2}{q-1} \sum_{s \in S} \sum_{\chi \text{ mod } q} \chi(s) U_\chi(B).
    \label{N_Br as a sum over U_chi}
\end{equation}
We begin by evaluating $V(B,a,f)$. For this we need the following lemma. A similar result is the subject of \cite{toth2002probability}, in particular if $X=Y=Z$ and $a=b=c$ the following lemma is a special case of the main theorem of \cite{toth2002probability}.
\begin{lemma}
Let $a,b,c$ be non-zero integers and let $X,Y,Z \geq 1$. Then there exists a non-zero constant $C_{a,b,c} = \prod_p C_p$ only depending on $a,b,c$ such that 
\begin{equation*}
\begin{split}
    L_{a,b,c}(X,Y,Z) &= \sum_{\substack{x \leq X \\ (x,a) = 1}}  \sum_{\substack{y \leq Y \\ (y,b) = 1}} \sum_{\substack{z \leq Z \\ (z,c) = 1}} \beta(x,y,z) = C_{a,b,c} XYZ \\ &+ O\left((\frac{\tau(a) (\log X)^2}{X} + \frac{\tau(b) (\log Y)^2}{Y} +\frac{\tau(c) (\log Z)^2}{Z}) XYZ\right).
    \end{split}
\end{equation*}
For any prime $p$ the constant $C_p$ depends on how many of the $a,b,c$ are divisible by the prime $p$. Namely , if $p$ divides exactly $i$ of them, then $C_p = (1 - \frac{1}{p})^{2}(1 + \frac{2-i}{p})$.
\label{Chance of pairwise coprime}
\end{lemma}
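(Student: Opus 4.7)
The plan is a double Möbius inversion: first to linearise the pairwise coprimality indicator $\beta(x,y,z)$, then to handle each coprimality-with-$a,b,c$ condition in the resulting inner sum. First I would expand
\[
\beta(x,y,z) = \sum_{d_1 \mid (x,y)}\mu(d_1)\sum_{d_2 \mid (y,z)}\mu(d_2)\sum_{d_3 \mid (x,z)}\mu(d_3)
\]
and swap the order of summation to reduce $L_{a,b,c}(X,Y,Z)$ to
\[
\sum_{d_1,d_2,d_3 \text{ sqfree}}\mu(d_1)\mu(d_2)\mu(d_3)\,T_x\,T_y\,T_z,
\]
where $T_x = \#\{x \leq X : [d_1,d_3]\mid x,\ (x,a)=1\}$ and analogously for $T_y, T_z$. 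A second Möbius expansion on the condition $(x,a)=1$ gives $T_x = \frac{\phi(a)}{a}\cdot\frac{X}{[d_1,d_3]} + O(\tau(a))$ when $(d_1d_3,a)=1$ and $T_x = 0$ otherwise; in addition, on the tail $[d_1,d_3] > X$ the sharper decay $|T_x - \tfrac{\phi(a)X}{a[d_1,d_3]}| \leq X/[d_1,d_3]$ holds.

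To extract the main term I would multiply out the three leading pieces of $T_x, T_y, T_z$ and obtain
\[
\frac{\phi(a)\phi(b)\phi(c)\,XYZ}{abc}\sum_{\substack{d_1,d_2,d_3 \text{ sqfree}\\ (d_1d_3,a)=(d_1d_2,b)=(d_2d_3,c)=1}}\frac{\mu(d_1)\mu(d_2)\mu(d_3)}{[d_1,d_3][d_1,d_2][d_2,d_3]}.
\]
The summand is multiplicative prime-by-prime, so the sum factors as an Euler product. At each prime $p$ I would enumerate the configurations $(\epsilon_1,\epsilon_2,\epsilon_3) \in \{0,1\}^3$ recording whether $p$ divides each $d_i$, discard those killed by the coprimality conditions, and compute the surviving local sum; combining with the factor $(1-1/p)^i$ coming from $\phi(a)\phi(b)\phi(c)/(abc)$ and checking each case $i \in \{0,1,2,3\}$ recovers the claimed $C_p = (1-1/p)^2(1+(2-i)/p)$. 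Since $C_p = 1 + O(1/p^2)$, the infinite product converges absolutely to a nonzero constant $C_{a,b,c} = \prod_p C_p$.

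For the error, writing $T_x = M_x + E_x$ (and similarly for $T_y, T_z$), the telescoping identity $T_x T_y T_z - M_x M_y M_z = E_x T_y T_z + M_x E_y T_z + M_x M_y E_z$ reduces everything to three sums. A representative contribution $\sum \mu\mu\mu\,E_x T_y T_z$ I would split into the ranges $[d_1,d_3] \leq X$ and $[d_1,d_3] > X$. On the bounded range I would use $|E_x| \leq \tau(a)$ together with $T_y \leq Y/[d_1,d_2]$, $T_z \leq Z/[d_2,d_3]$, the forced constraints $d_1, d_3 \leq X$, the standard estimate $\sum_{d \leq X}(d,d_2)/d \ll \tau(d_2)\log X$, and the convergence of $\sum_{d_2}\tau(d_2)^2/d_2^2$, yielding $O(\tau(a)(\log X)^2 YZ)$. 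On the tail I would apply a Rankin--Selberg majorant $1_{m > X} \leq (m/X)^\sigma$ with $\sigma = 1 - 1/\log X$; the absolute convergence of the Euler product at this $\sigma$ yields a contribution of the same order. The other two sums give the symmetric bounds with $(b,Y)$ and $(c,Z)$, and summing produces the stated error.

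The main obstacle is the tail estimate: the naive sum $\sum 1/([d_1,d_2][d_2,d_3])$ obtained after dropping $1/[d_1,d_3]$ diverges logarithmically, so one genuinely needs the sharper decay $|E_x(m)| \leq X/m$ for $m > X$ together with the (not-immediately-obvious) absolute convergence of the full triple Möbius series at the level of its Euler product — which is hidden by the crude estimate $[d_i,d_j] \geq \sqrt{d_id_j}$ but visible from the local computation giving $1 + O(1/p^2)$ factors — in order to confine the logarithmic loss to $(\log X)^2$ rather than picking up unwanted cross-logarithms in $Y$ and $Z$.
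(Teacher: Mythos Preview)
Your argument is correct, and runs parallel to the paper's but with a genuinely different opening move. You expand $\beta$ directly as the product of three pairwise-coprimality indicators,
\[
\beta(x,y,z)=\sum_{d_1\mid(x,y)}\mu(d_1)\sum_{d_2\mid(y,z)}\mu(d_2)\sum_{d_3\mid(x,z)}\mu(d_3),
\]
which leads to denominators $[d_1,d_2]\,[d_2,d_3]\,[d_1,d_3]$ after the swap. The paper instead uses a four-variable encoding
\[
\beta(x,y,z)=\sum_{t\mid(x,y,z)}\;\sum_{u\mid(x,y)}\;\sum_{v\mid(x,z)}\;\sum_{w\mid(y,z)}\mu(uvwt)\,\mu(t)\,\tau(t),
\]
whose support forces $u,v,w,t$ pairwise coprime and hence produces the visibly separable denominator $u^2v^2w^2t^3$. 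That extra variable buys the paper a cleaner error treatment: the tail of the completed series is bounded by elementary partial summation against $\sum_{n\le X}\tau(n)\ll X\log X$, with no lcm bookkeeping and no Rankin step. Your three-variable version is more natural to write down but, as you correctly identify, forces you to exploit the $1+O(p^{-2})$ shape of the local factors (which is hidden by the crude bound $[d_i,d_j]\ge\sqrt{d_id_j}$) together with a Rankin majorant to keep the tail from leaking cross-logarithms in $Y,Z$. One small simplification: in the Rankin step any fixed $\sigma\in(0,1)$ already gives a convergent shifted Euler product and hence a power-saving tail $O(X^{-\sigma}\cdot XYZ)$; the borderline choice $\sigma=1-1/\log X$ works too but is not needed. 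Both routes recover the same Euler product $C_{a,b,c}$ and the same error shape.
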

In the proof of this lemma we will require the notion of a multivariable multiplicative function. A survey of this notion is the subject of \cite{toth2014multiplicative}.
\begin{definition}
A function $f:\Z_{> 0}^r \to \C$ is multiplicative if it is not identically zero and
\begin{equation*}
    f(m_1 n_1, \cdots, m_r n_r) = f(m_1, \cdots, m_r) f(n_1, \cdots, n_r)
\end{equation*}
holds for any $m_1, \cdots m_r, n_1, \cdots n_r \in \Z_{> 0}$ such that $(m_1 \cdots m_r, n_1 \cdots n_r) = 1.$
\end{definition}
If $f$ is multiplicative then $f(1, \cdots, 1) = 1$ and
\begin{equation*}
    f(n_1, \cdots, n_r) = \prod_p f(p^{v_p(n_1)}, \cdots, p^{v_p(n_r)}).
\end{equation*}
A multiplicative function $f$ is thus completely decided by its values at       \\ $f(p^{\alpha_1}, \cdots, p^{\alpha_r})$ for $p$ a prime and $\alpha_1, \cdots \alpha_r \in \Z_{\geq 0}$.
We now prove Lemma \ref{Chance of pairwise coprime}.
\begin{proof}
We will first encode the conditions $x,y,z$ pairwise coprime in a more useful way. Note that $\beta(x,y,z)$ is multiplicative. We can encode this condition as
\begin{equation}
   \beta(x,y,z) =  \sum_{\substack{t \mid (x,y,z)}}\sum_{\substack{u \mid  (x,y)}}  \sum_{\substack{v \mid  (x,z)}} \sum_{\substack{w \mid (y,z)}} \mu(uvwt)\mu(t) \tau(t).
   \label{Encoding of pairwise coprime}
\end{equation}
Indeed the right hand side is also multiplicative for the same reason that the Dirichlet convolution of two single variable multiplicative functions is multiplicative. It thus suffices to check the equality in the case that $x,y,z$ are prime powers and this is a simple computation.

Use this encoding (\ref{Encoding of pairwise coprime}) of $\beta$ in the definition of $L_{a,b,c}(X,Y,Z)$. Since the terms are non-zero unless $t,u,v,w$ are pairwise coprime we can switch the order of summation and get that
\begin{equation}
    L_{a,b,c}(X,Y,Z) = \sum_{\substack{uvt \leq X, \  (uvt,a) = 1 \\ uwt \leq Y, \ (uwt,b) = 1 \\ vwt \leq Z, \ (vwt,c)=1}} \mu(uvwt)\mu(t) \tau(t) \sum_{\substack{x \leq \frac{X}{uvt} \\ (x,a) = 1}}  \sum_{\substack{y \leq \frac{Y}{uwt} \\ (y,b) = 1}} \sum_{\substack{z \leq \frac{Z}{vwt} \\ (z,c) = 1}} 1.
    \label{pairwise coprime eq 1}
\end{equation}
Now for the inner sums use the standard fact that $$\sum_ {\substack{x \leq X \\ (x,a) = 1}} 1= \sum_{x \leq X} \sum_{d |(x,a)} \mu(d) = \sum_{d |a}( \frac{ \mu(d)X}{d} + O(1)) = \frac{\phi(a)X}{a} + O(\tau(a)).$$ Applying this and the trivial inequality $\sum_ {\substack{x \leq X \\ (x,a) = 1}} 1 \ll X$ if $X \leq \tau(a)$ thrice one finds that 
\begin{equation*}
\begin{split}
    \sum_{\substack{x \leq \frac{X}{uvt} \\ (x,a) = 1}}  \sum_{\substack{y \leq \frac{Y}{uwt} \\ (y,b) = 1}} \sum_{\substack{z \leq \frac{Z}{vwt} \\ (z,c) = 1}} 1 &= \frac{XYZ}{u^2 v^2 w^2 t^3} \Big (\frac{\phi(a)\phi(b)\phi(c)}{abc} \\ &+ O(\frac{\tau(a)uvt}{X} + \frac{\tau(b)uwt}{Y} +\frac{\tau(c)vwt}{Z}) \Big ).
    \end{split}
\end{equation*}
If we apply this to \eqref{pairwise coprime eq 1} and uses trivial inequalities for the sums over the error terms we get that
\begin{equation*}
\begin{split}
    L_{a,b,c}(X,Y,Z) =& K_{a,b,c}(X,Y,Z) XYZ \Big (\frac{\phi(a)\phi(b)\phi(c)}{abc}  \\ +&O(\frac{\tau(a) (\log X)^2}{X} + \frac{\tau(b) (\log Y)^2}{Y} +\frac{\tau(c) (\log Z)^2}{Z})\Big ).
\end{split}
\end{equation*}
Where 
\begin{equation}
    K_{a,b,c}(X,Y,Z) = \sum_{\substack{uvt \leq X, \  (uvt,a) = 1 \\ uwt \leq Y, \ (uwt,b) = 1 \\ vwt \leq Z, \ (vwt,c)=1}} \frac{\mu(uvwt)\mu(t) \tau(t)}{u^2v^2w^2t^3}.
    \label{definition K}
 \end{equation}

We now show that $K_{a,b,c}(X,Y,Z)$ converges as $X, Y, Z \to \infty$ and decide its speed of convergence. For this we have to bound the size of 
\begin{equation*}
    \left| \sum_{\substack{uvt \geq X, \  (uvt,a) = 1 \\ \text{or }uwt \geq Y, \ (uwt,b) = 1 \\ \text{or } vwt \geq Z, \ (vwt,c)=1}} \frac{\mu(uvwt)\mu(t) \tau(t)}{u^2v^2w^2t^3}  \right|
    \leq \sum_{t} \frac{\tau(t)}{t^3} \sum_{\substack{uv \geq \frac{X}{t} \\ \text{or }uw \geq \frac{Y}{t} \\ \text{or } vw \geq \frac{Z}{t}}} \frac{1}{u^2v^2w^2}.
  \label{relative prime constant speed of convergence}
\end{equation*}
We can bound this inner sum as 3 sums over $uv \geq \frac{X}{t}$, etc. Since they are analogous we only do one of these and by putting $n = uv$ we can bound this term as
\begin{equation*}
    \sum_{t} \frac{\tau(t)}{t^3} \sum_{w} \frac{1}{w^2} \sum_{n \geq \frac{X}{t}}\frac{\tau(n)}{n^2} \ll  \sum_{t} \frac{\tau(t)}{t^3} \sum_{w} \frac{1}{w^2} \frac{t \log(\frac{X}{t})}{X} \ll  \frac{\log X}{X}.
\end{equation*}
Where we have used partial summation and the classical fact \cite[\S I.3.2]{tenenbaum46introduction} that $$\sum_{n \leq X} \tau(n) = X\log X + O(X)$$ two times. This gives a total error term of size $O(\frac{\log X}{X} +\frac{\log Y}{Y} +\frac{\log Z}{Z}).$

It thus only remains to compute the value of the completed sum
\begin{equation}
    \sum_{\substack{u,v,w,t \\ (uvt,a) = (uwt,b) = (vwt,c) = 1}} \frac{\mu(uvwt)\mu(t) \tau(t)}{u^2v^2w^2t^3}.
\end{equation}
Because the terms are multiplicative in $u,v,w,t$, \cite[Proposition 11]{toth2014multiplicative} implies that this sum convergences to the Euler product $\prod_{p} C'_p$. The value of $C'_p$ only depends on how many of the $a,b,c$ the prime $p$ divides. Let $i$ be this amount.
\begin{itemize}
    \item If $i=0$ then $C'_p = 1 - \frac{3}{p^2} + \frac{2}{p^3} = (1- \frac{1}{p})^2(1+ \frac{2}{p})$.
    \item If $i = 1$ then $C'_p = 1- \frac{1}{p^2} = (1- \frac{1}{p})(1+ \frac{1}{p})$
    \item If $i = 2,3$ then $C'_p = 1$.
\end{itemize}
It remains to prove that $C_{a,b,c} = \frac{\phi(a)\phi(b)\phi(c)}{abc} \prod_p C'_p$. This is clearly true since $C_p = C'_p(1-\frac{1}{p})^i$. This completes the proof.
\end{proof}
From this lemma it now follows by taking $X = \frac{ \sqrt{B}}{q \sqrt{af}}, Y = \frac{\sqrt{B}}{\sqrt{af}}, Z = \frac{\sqrt{B}}{\sqrt{q} f}$ and $a = f, b= qf, c =1$ that 
\begin{equation}
  V(B,a,f) = \frac{B^{\frac{3}{2}}}{(q^{\frac{3}{2}}a f^2}\left(C_f + O(\frac{\tau(f) }{\sqrt{B}}(f(\log \frac{B}{f^2})^2 + \sqrt{af}(\log \frac{B}{af})^2\right),
    \label{Triple sum for lower bound}
\end{equation}
where 
\begin{equation*}
    C_f = \frac{q+1}{q+2} \prod_{p \mid f}(1 + \frac{2}{p})^{-1} \prod_{ p}(1-\frac{3}{p^2} + \frac{2}{p^3}).
\end{equation*}

We will now compute $W_\chi(B,f)$. Using the formula (\ref{Triple sum for lower bound}) in the definition of $W_\chi(B,f)$ (\ref{Definition of W_chi}) we find that \begin{equation}
\begin{split}
    W_\chi(B,f) &= \left(\frac{B^{\frac{3}{2}}C_f}{q^{\frac{3}{2}}f^2} + O(\frac{\tau(f) B (\log B)^2 }{f})\right)\sum_{a \leq \frac{B}{q^2 f}} \frac{\chi(a) \alpha(a)}{a} \\ &+ O(\frac{B }{f^ {\frac{3}{2}}}\sum_{a \leq \frac{B}{q^2 f}} \frac{\alpha(a)}{a^{\frac{1}{2}}} (\log \frac{B}{a})^2).
        \end{split}
    \label{Formula for W_chi}
\end{equation}

To compute this we will first compute $\sum_{a \leq \frac{B}{q^2 f}} \chi(a) \alpha(a)$ and then apply partial summation. This sum will be evaluated using the Selberg-Delange method. See \cite[II.5 Theorem~3]{tenenbaum46introduction} for a precise statement of the Selberg Delange method and the preceding section for the definition of type $\mathcal{T}$.

\begin{lemma}
Let $\chi$ be a character modulo $q$ and let $\psi$ be the character $(\frac{\_}{q})$. Then we have the following. If $\chi = \psi$ or if $\chi$ is principal, then
\begin{equation*}
\sum_{\substack{a \leq x}} \chi(a)\alpha(a) = D x(\log x)^{-\frac{1}{2}} + O(x (\log x)^{-\frac{3}{2}}),
\end{equation*}
where
$D = \pi^{-\frac{1}{2}} (1- \frac{1}{q})^{\frac{1}{2}} \prod_{p}(1-p)^{-\frac{\psi(p)}{2}}$. Otherwise
\begin{equation*}
    \sum_{a \leq x }  \chi(a) \alpha(a) = O(x e^{{-d_1 \sqrt{\log x}}})
\end{equation*}
for some constant $d_1 > 0$.
\label{Selberg-Delange}
\end{lemma}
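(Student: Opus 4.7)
The approach is to apply the Selberg--Delange method to the multiplicative function $\chi\alpha$. First, I form its Dirichlet series
\[ F(\chi, s) = \sum_{a \geq 1} \chi(a)\alpha(a)\, a^{-s} = \prod_{\psi(p) = 1} (1 - \chi(p) p^{-s})^{-1}, \]
using that $\alpha(p^k) = 1$ exactly when $\psi(p) = 1$. To isolate the singular behaviour at $s = 1$ I compare, Euler factor by Euler factor, with the Dirichlet $L$-functions $L(\chi, s)$ and $L(\chi\psi, s)$, which yields
\[ F(\chi, s)^2 = R(\chi, s)\, L(\chi, s)\, L(\chi\psi, s), \]
where $R(\chi, s) = (1 - \chi(q) q^{-s}) \prod_{\psi(p) = -1}(1 - \chi(p)^2 p^{-2s})$ converges absolutely and is non-vanishing on $\{\mathrm{Re}\, s > 1/2\}$.

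I then case-split on $\chi$. If $\chi$ is principal, then $L(\chi_0, s) = \zeta(s)(1 - q^{-s})$ has a simple pole at $s = 1$ while $L(\chi_0\psi, s) = L(\psi, s)$ is holomorphic and non-zero there; if $\chi = \psi$ the same statement holds with the roles reversed, since $\psi^2 = \chi_0$. In both of these cases $F(\chi, s)^2$ has a simple pole at $s = 1$, and a holomorphic square root on a slit neighbourhood gives $F(\chi, s) = \zeta(s)^{1/2} G(s)$ with $G$ holomorphic and non-vanishing at $s = 1$. For every other $\chi$, both $L(\chi, s)$ and $L(\chi\psi, s)$ are entire at $s = 1$, so $F(\chi, s)$ itself extends holomorphically across that line.

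I then invoke the Selberg--Delange theorem \cite[II.5, Theorem~3]{tenenbaum46introduction} with exponent $z = 1/2$ in the first two cases, obtaining
\[ \sum_{a \leq x} \chi(a)\alpha(a) = \frac{G(1)}{\Gamma(1/2)}\, x\, (\log x)^{-1/2} + O\!\left( x (\log x)^{-3/2} \right), \]
and with $z = 0$ in the remaining cases, obtaining the bound $O(x \exp(-d_1 \sqrt{\log x}))$. The constant $D$ is then identified by evaluating
\[ G(1)^2 = (1 - 1/q)\, L(\psi, 1) \prod_{\psi(p) = -1}(1 - p^{-2}), \]
and rewriting the right-hand side, via $L(\psi, 1) = \prod_p (1 - \psi(p)/p)^{-1}$ together with $(1 - p^{-2}) = (1 - 1/p)(1 + 1/p)$, as the conditionally convergent Euler product $\prod_p (1 - 1/p)^{-\psi(p)}$; taking square roots and dividing by $\Gamma(1/2) = \sqrt{\pi}$ matches the claimed formula.

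The main obstacle is checking the hypotheses of Selberg--Delange, i.e.\ that $G(s)$ (or $F(\chi, s)$ itself in the non-pole cases) is of type $\mathcal{T}$ in the sense of \cite{tenenbaum46introduction}: I must produce a holomorphic extension into a region of the shape $\mathrm{Re}\, s \geq 1 - c/\log(|t| + 2)$ with polynomial growth in $|t|$. This is routine but tedious, following from the standard zero-free region together with Phragm\'en--Lindel\"of convexity bounds for the finitely many Dirichlet $L$-functions modulo $q$ involved.
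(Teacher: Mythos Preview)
Your proposal is correct and follows essentially the same route as the paper: factor the Dirichlet series $\sum_a \chi(a)\alpha(a)a^{-s}$ as $\big(L(\chi,s)L(\chi\psi,s)\big)^{1/2}$ times an Euler product that is holomorphic and bounded for $\mathrm{Re}\,s>1/2$, verify the type-$\mathcal{T}$ hypotheses via the standard zero-free region and convexity bounds, and apply the Selberg--Delange theorem with exponent $z=1/2$ (respectively $z=0$). The only cosmetic differences are that the paper first reduces $\chi=\psi$ to the principal case by observing $\psi(a)\alpha(a)=\alpha(a)$ whenever $\alpha(a)\neq 0$, and that your factor $(1-\chi(q)q^{-s})$ in $R(\chi,s)$ is identically $1$ since $\chi(q)=0$ for characters modulo~$q$; the $(1-q^{-s})$ actually enters through $L(\chi_0,s)=\zeta(s)(1-q^{-s})$, exactly as you use it when computing $G(1)^2$.
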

\begin{proof}
We first do the case $\chi= \psi$ or $\chi$ is principal. If $\chi = \psi$, then the only terms in the sum for which $\alpha(a)$ is non-zero, are by definition the ones such that for all primes $p|a$ we have $\psi(p) = 1$. Since $\psi$ is multiplicative this implies that $\psi(a) = 1$. Now look at the associated Dirichlet series of this sum
\begin{equation*}
\begin{split}
    F(s) = \sum_{n}  \alpha(n)n^{-s} & = \prod_{\substack{p \\ \psi(p)=1}}(1-p^{-s})^{-1}  \\ &= (1-\frac{1}{q^{s}})^{\frac{1}{2}}\zeta(s)^{\frac{1}{2}}L(\psi,s)^{\frac{1}{2}}\prod_{\substack{p \\ \psi(p)=-1}}(1-p^{-2s})^{\frac{1}{2}}.
\end{split}
\end{equation*}
We were able to write the above as this Euler product since $\alpha$ is totally multiplicative. 
In the following we write $K(s) =\prod_{\substack{p,  \psi(p)=-1}}(1-p^{-2s})^{\frac{1}{2}}$. We will use the classical notation $s = \sigma + it$. For $\sigma > \frac{1}{2}$ we have the inequality 
\begin{equation*}
    \prod_{\substack{p,  \psi(p)=-1}}|(1-p^{-2s})^{\frac{1}{2}}| \leq \prod_p (1 + p^{-2 \sigma})^{\frac{1}{2}}  \leq \prod_p (1 - p^{-2 \sigma)})^{-\frac{1}{2}} = \zeta( 2 \sigma)^{\frac{1}{2}}.
\end{equation*}
  So $K(s)$ is holomorphic and bounded by $\zeta(\frac{3}{2})^{\frac{1}{2}}$ in the region $\text{Re}(s) > \frac{3}{4}$.
 It is known \cite[Chapter 14]{davenportmultiplicative} that there exist some constant $c_0$ such that for every character $\chi$ modulo $q$ the Dirichlet series $L(\chi,s)$ has no zeroes in the region
  $$\sigma \geq 1 - \frac{c_0}{1 + \log(1 + |t|)},$$
  One can get rid of the possible Siegel zero by taking a smaller $c_0$. We will also require the bound $L(\sigma + i t,\psi) \leq (\frac{q|\sigma + it|}{2\pi})^{\frac{3- 2\sigma}{4}} \zeta(\frac{3}{2}) \leq (\frac{q}{2\pi})(1 + |t|)^{\frac{1}{2}} \zeta(\frac{3}{2})$ for $\frac{1}{2} \leq \sigma \leq 1$, which is the case $\eta = \frac{1}{2}$ of \cite[Theorem 3]{rademacher1958phragmen}. Here we used the fact that $q > 2\pi$. Let $c_1 = \min(c_0, \frac{1}{4})$, then the preceding discussion implies that $F(s)$ is of type $\mathcal{T}(\frac{1}{2}, \frac{1}{2}, c_1, \frac{3}{4}, (1+\frac{1}{q^{\frac{3}{4}}})^{\frac{1}{2}} (\frac{q}{2\pi} )^{\frac{1}{2}} \zeta(\frac{3}{2})).$ The Selberg-Delange method immediately implies the desired asymptotic formula.
  For the other characters $\chi$ we use the Selberg-Delange method once again. The associated Dirichlet series is
  \begin{equation*}
   \begin{split}
       F(s) = \sum_{n} \chi(n) \alpha(n)n^{-s} & = \prod_{\substack{p \\ \psi(p)=1}}(1- \chi(p)p^{-s})^{-1} \\ & = L(\psi,s )^{\frac{1}{2}}L(\chi \psi,s)^{\frac{1}{2}}\prod_{\substack{p \\ \psi(p)=-1}}(1-p^{-2s})^{\frac{1}{2}}.
   \end{split} 
   \end{equation*}
   So for similar reasons as before $F(s)$ is of type $\mathcal{T}(0, \frac{1}{2}, c_1, \frac{1}{2}, \frac{q}{2\pi} \zeta(\frac{3}{2})^{\frac{3}{2}})$ which by the Selberg-Delange method implies the desired bound.
  \end{proof}

   If we first apply this lemma for the principal character to the sum over $a$ in the error terms of (\ref{Formula for W_chi}) and apply partial summation we get that this contributes an error of size $$O(\frac{B \tau(f)}{f}(\log B)^3  + \frac{B^{\frac{3}{2}}\tau(f)}{f^2} (\log\frac{B}{f})^{-\frac{1}{2}}).$$ We then apply this lemma and partial summation to the sum over $a$ in the main term of (\ref{Formula for W_chi}). Combining this with the error above gives us that 
\begin{equation}
    W_\chi(B,f) = \delta_\chi C_f 2D\frac{B^{\frac{3}{2}}}{q^{\frac{3}{2}}f^2}(\log\frac{B}{f})^{\frac{1}{2}} + O(\frac{B \tau(f)}{f}(\log B)^3  + \frac{B^{\frac{3}{2}}\tau(f)}{f^2} (\log B)^{-\frac{1}{2}}).
    \label{Asymptotic formula for W}
\end{equation}
Where $\delta_\chi = 1$ if $\chi$ is principal or $(\frac{\_}{q})$ and zero otherwise.

The next step is to compute $U_\chi(B)$. If we use (\ref{Asymptotic formula for W}) in its definition (\ref{Definition of U_chi}) and bound the sum over the error terms using the divisor bound $\tau(f) \ll_{\epsilon} f^{\epsilon}$ \cite[Corollary I.5.1.1]{tenenbaum46introduction} with e.g. $\epsilon = \frac{1}{4}$ and the trivial bounds $|\chi(f) \alpha(f)| \leq 1$ we find that 
\begin{equation}
U_\chi(B) =  \delta_\chi \frac{2D}{q^{\frac{3}{2}}} B^{3/2} \sum_{f \leq \sqrt{\frac{B}{q}}} \frac{\mu(f)  C_f}{f^2} (\log \frac{B}{f})^{\frac{1}{2}} \alpha(f)\chi(f) + O(B^{\frac{3}{2}} (\log B)^{-\frac{1}{2}}).
   \label{Lower bound before sum f}
\end{equation}

Note that the only relevant cases are when $\chi$ is principal or $(\frac{\_}{q})$ since otherwise $\delta_\chi = 0$. In both cases the only non-zero terms are when $\alpha(f) = 1$, i.e. when for every prime $p | f$ one has $(\frac{p}{q}) = 1$. In this case $(\frac{f}{q}) = 1$ so in both cases $\chi(f) = 1$. By the definition of $C_f$ we have $C_f = \prod_{p \mid f}(1 + \frac{2}{p})^{-1} C_1$. Using trivial bounds we see that the sum $\sum_{f \leq \sqrt{\frac{B}{q}} } \frac{\mu(f)}{f^2}\prod_{p \mid f}(1 + \frac{2}{p})^{-1} \alpha(f)$ converges to its Euler product.
\begin{equation}
     \sum_{f \leq \sqrt{\frac{B}{q}} } \frac{\mu(f)}{f^2}\prod_{p \mid f}(1 + \frac{2}{p})^{-1} \alpha(f) =  \prod_{\substack{p \\ \psi(p) =1}}(1-\frac{1}{p(p+2)}) + O(B^{-\frac{1}{2}}).
     \label{Sum f without log}
\end{equation}

We then apply partial summation to the sum \eqref{Lower bound before sum f} and by \eqref{Sum f without log} we get
\begin{equation}
U_\chi(B) = \delta_\chi \frac{2C_1 D}{q^{\frac{3}{2}}}\prod_{\substack{p \\ \psi(p) =1}}(1-\frac{1}{p(p+2)}) B^{\frac{3}{2}}  (\log B)^{\frac{1}{2}} + O(B^{\frac{3}{2}} (\log B)^{-\frac{1}{2}}).
\label{Asymptotic formula for U}
\end{equation}
Let us write $E =  \frac{2C_1 D}{q^{\frac{3}{2}}}\prod_{\substack{p \\ \psi(p) =1}}(1-\frac{1}{p(p+2)})$ to simplify the notation. It then only remains to apply this formula to (\ref{N_Br as a sum over U_chi}), this gives
\begin{equation*}
    N'_{\text{Br}}(B) =  \frac{2 E}{q-1}\sum_{s \in S} (1 + (\frac{s}{q})) B^{\frac{3}{2}}  (\log B)^{\frac{1}{2}} + O(B^{\frac{3}{2}} (\log B)^{-\frac{1}{2}}).
\end{equation*}
But by the definition of $S$ \eqref{Definition S} we know that $(\frac{s}{q}) = 1$ for $s\in S$ and that $|S| = \frac{q-1}{4}$ so we can conclude that 
\begin{equation}
    N'_{\text{Br}}(B) =  E B^{\frac{3}{2}}  (\log B)^{\frac{1}{2}} + O(B^{\frac{3}{2}} (\log B)^{-\frac{1}{2}}).
\end{equation}
Since $N'_{\text{Br}}(B) \leq N_{\text{Br}}(B)$ this implies the desired bound \begin{equation*}
    B^{\frac{3}{2}} (\log B)^{\frac{1}{2}} \ll_n N_{\text{Br}}(B).
\end{equation*}
\begin{remark}
One might try to prove a better lower bound by considering multiple such families for varying primes $q$ and adding all of these together. This will give no improvement since as $q$ varies, $C_1$ and $\prod_{\substack{p \\ \psi(p) =1}}(1-\frac{1}{p(p+2)})$ are bounded and $D$ goes up to $L((\frac{\_}{q}), 1)^{\frac{1}{2}}\ll \log q$ as can be seen from the proof of Lemma \ref{Selberg-Delange}.
\end{remark}
\subsection{The upper bound.}
To find an upper bound we will use the following lemma from \cite{mitankin2017}.
\begin{lemma}
Let $a,b,c \in \Z$, if there exists an odd prime $p$ such that $v_p(a)$ is odd and $p \nmid  bcn$ then $X_{a,b,c}: ax^2 + by^2 +cz^2 = n$ has no integral Brauer-Manin obstruction.
\label{Condition upper bound}
\end{lemma}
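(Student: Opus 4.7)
The plan is as follows. First one reduces to the nontrivial case: if $d := -\text{disc}(q) \cdot n$ is a rational square, then Section \ref{procedure integral affine quadric} tells us that $\text{Br}(X_{a,b,c})/\text{Br}(\Q) = 0$, so the integral Brauer-Manin set equals $\mathcal{X}(\mathbb{A}_\Z)$ and there is nothing to obstruct; similarly if $\mathcal{X}(\mathbb{A}_\Z) = \emptyset$ then the statement is vacuous. In the remaining case $\text{Br}(X_{a,b,c})/\text{Br}(\Q) \cong \Z/2\Z$ is generated by a class $\alpha$ of the form $(\ell_1/t, d)$ produced by the algorithm of Section \ref{procedure integral affine quadric}.

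Next, I would reduce to a local surjectivity statement at the distinguished prime $p$. Since $\alpha$ is $2$-torsion, the partial sum $s := \sum_{v \neq p} \text{inv}_v(\alpha(P_v))$ evaluated at any integral adelic point always lies in $\tfrac12\Z/\Z$. So it suffices to show that the map $\Phi_p : \mathcal{X}(\Z_p) \to \tfrac12\Z/\Z$, $P \mapsto \text{inv}_p(\alpha(P))$, is surjective: given any $(P_v) \in \mathcal{X}(\mathbb{A}_\Z)$, replacing the $p$-component by a preimage of $-s$ under $\Phi_p$ produces an integral adelic point lying in $\mathcal{X}(\mathbb{A}_\Z)^{\text{Br}}$.

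To prove this surjectivity I would carry out the relevant Hilbert symbol computation. The hypotheses $p \nmid bcn$ and $v_p(a)$ odd give $v_p(d) = v_p(-abcn) = v_p(a)$, which is odd, so $d$ is (up to squares in $\Q_p^\times$) the uniformizer $p$; for any $u \in \Z_p^\times$ one then has $(u,d)_p = \left(\tfrac{u}{p}\right)$. On the other hand, the reduction of $\mathcal{X}$ modulo $p$ is cut out by $by^2 + cz^2 = n$ with $x$ free (since $p \mid a$), and this reduction is smooth over $\mathbb{F}_p$ because every solution has $(y,z) \neq (0,0)$ (otherwise $p \mid n$); hence every $\mathbb{F}_p$-point of the conic $by^2 + cz^2 = n$ lifts by Hensel's lemma to a $\Z_p$-point of $\mathcal{X}$, and $x$ can be varied freely in $\Z_p$. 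The restriction of $\ell_1$ to $\mathcal{X}(\Z_p)$ is a non-constant affine function of $(x,y,z)$ modulo $p$, so exhibiting two such lifts on which $\ell_1$ reduces to units in distinct cosets of $\mathbb{F}_p^\times / \mathbb{F}_p^{\times 2}$ produces two $\Z_p$-points with distinct $\Phi_p$-values.

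The hard part will be the final step. The rational point $M$ feeding the algorithm depends on the equation and may have poor $p$-adic reduction, so one needs to rescale coordinates to arrange that $\ell_1$ has a controlled reduction modulo $p$, and to verify that enough $\Z_p$-lifts exist on which $\ell_1$ attains both Legendre-symbol values while avoiding the vanishing locus. A useful piece of flexibility, noted in Section \ref{procedure integral affine quadric}, is that $\alpha$ is also represented modulo $\text{Br}(\Q)$ by $(\ell_2/t, d)$ for the companion factor in $q(x,y,z) - nt^2 = \ell_1 \ell_2 + c'(\ell_3^2 - d\ell_4^2)$; switching between $\ell_1$ and $\ell_2$ (or adding an element of $\text{Br}(\Q)$) shifts $\Phi_p$ by a constant in $\tfrac12\Z/\Z$, which preserves the surjectivity question but can be exploited to dodge the zero locus when picking explicit test points.
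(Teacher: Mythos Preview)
The paper does not actually prove this lemma; it merely quotes it from Mitankin's earlier article \cite{mitankin2017}. So there is no in-paper proof to compare your argument against.

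On its own merits, your outline is the right shape: reduce to the case $\text{Br}(X)/\text{Br}(\Q)\cong\Z/2\Z$, use that $v_p(d)=v_p(-abcn)=v_p(a)$ is odd so that $(u,d)_p=\bigl(\tfrac{u}{p}\bigr)$ for units $u$, and then try to show the local evaluation $\Phi_p:\mathcal{X}(\Z_p)\to\tfrac12\Z/\Z$ is surjective. You also correctly note that the special fibre is $\{by^2+cz^2=n\}\times\mathbb{A}^1_x$ and is smooth, so Hensel gives plenty of $\Z_p$-points.

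Where the argument is genuinely incomplete is exactly where you flag it. Your sentence ``$x$ can be varied freely'' suggests you want to move $\ell_1$ through all residues mod $p$ by changing $x$; but for the tangent-plane representative $\ell_1 = ax_0x+by_0y+cz_0z-nt_0t$ the $x$-coefficient is $ax_0$, and since $p\mid a$ this coefficient is $p$-divisible whenever $M$ is scaled to have coprime integer coordinates. So varying $x$ does nothing to $\bar\ell_1$ mod $p$; the reduction $\bar\ell_1$ depends only on $(y,z)$, and in fact $\bar\ell_1 = b\bar y_0\,y + c\bar z_0\,z - n\bar t_0$ is (when $\bar t_0\neq 0$) the affine tangent line to the reduced conic $C:by^2+cz^2=n$ at the reduced point $(\bar y_0,\bar z_0)$. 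You are then asking whether a tangent line to a smooth affine conic over $\mathbb{F}_p$ hits both quadratic-residue classes on $C$, and you must also handle the cases where $M$ has bad reduction at $p$ (denominators in $M$, or $\bar y_0=\bar z_0=\bar t_0=0$). None of this is insurmountable, but it is the actual content of the lemma, and your proposal stops short of it. If you want to complete this route, a cleaner device than juggling $\ell_1$ versus $\ell_2$ is to parametrise $C(\mathbb{F}_p)$ rationally and compute $\bar\ell_1$ along the parametrisation; alternatively, consult Mitankin's original argument, which chooses a convenient explicit representative of the Brauer class adapted to the prime $p$.
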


We can thus bound $N_{\text{Br}}(B)$ by the amount of triples $(a,b,c) \in \Z^{3} \cap [-B,B]^3$ such that for all prime divisors $p\mid a$ the integer $v_p(a)$ is even or $p\mid bcn$ and such that $ax^2 + by^2 +cy^2 =n$ has local solutions everywhere. Similarly, for $b$ and $c$. Such a triple can be written as 
\begin{equation*}
    \begin{split}
        a &= v_1 u_{12} u_{13} w_{12} w_{13} w_{21}^2 w_{31}^2a_1^2, \\
        b &= v_2 u_{12} u_{23} w_{21} w_{23} w_{12}^2 w_{32}^2 b_1^2, \\
        c &= v_3 u_{13} u_{23} w_{31} w_{32} w_{13}^2 w_{23}^2 c_1^2.
    \end{split}
\end{equation*}
Here the $v_i$ have only prime factors dividing $2n$ and the $u_{ij}$ are odd squarefree. Such a decomposition can be found as follows:
\begin{itemize}
    \item The $v_i$ contain all the prime factors of $a,b,c$ dividing $2n$, they also have the same sign as $a,b,c$.
    \item The positive squarefree number $u_{12}$ is the product of the primes $p |(a,b)$, not counted with multiplicity, such that both $v_p(a)$ and $v_p(b)$ are odd. Completely analogous for $u_{13}, u_{23}.$
    \item The positive number $w_{12}$ is the product of the prime numbers $p | (a,b)$, also not counted with multiplicity, such that $v_p(a)$ is odd but $v_p(b)$ is even. The other $w_{ij}$ are analogous.
    \item The products of the prime factors that are left are squares because of the conditions coming from Lemma \ref{Condition upper bound} and can thus be written as $a_1^2, b_1^2, c_1^2.$
\end{itemize}

Because the equation $ax^2 + by^2 +cy^2 =n$ needs to have solutions locally everywhere we see that for $p\mid u_{12}$ we need $(\frac{v_3 u_{13} u_{23} w_{12} w_{13} n }{p}) =1$ by looking modulo $p$. We also have analogous conditions for $u_{13}, u_{23}$. Let now $\epsilon(v)$ be the indicator function of the set $\{v: p\mid v \Rightarrow p\mid 2n\}$ and $\delta(u ; v)$ be the indicator function of the set \begin{equation}
    \{u :u \text{ squarefree, odd and } p\mid u \Rightarrow (\frac{v}{p}) = 1\}.
    \label{Definition delta}
\end{equation} The signs of $a, b, c$ are immaterial in these conditions so we can assume that $a,b,c \geq 0$ and $u_{12}, u_{13}, u_{23} \geq 1$. Summing over $a_1, b_1, c_1$ and using that there are $O(B^{\frac{1}{2}})$ squares less than $B$ we get the following upper bound for $N_{\text{Br}}(B)$:
\begin{equation}
    \ll B^{\frac{3}{2}} \sum_{v_i \leq B}\frac{\epsilon(v_i)}{\sqrt{v_i}} \sum_{w_{ij} \leq B} \frac{1}{\sqrt{w_{ij}}^3} T(B; v_1 w_{12} w_{13} n, v_2 w_{21} w_{23} n , v_3 w_{31} w_{32} n).
    \label{The upper bound first eq}
\end{equation}
Here
\begin{equation}
    T(B;k,l,m) = \sum_{ u_{12}, u_{13}, u_{23} \leq B} \frac{\delta(u_{23}; k u_{12} u_{13}) \delta(u_{13}; l u_{12} u_{23}) \delta(u_{12}; m u_{13} u_{23})}{u_{12} u_{13} u_{23}}.
    \label{T definition}
\end{equation}
Now to bound $T$ we will first look at the related quantity
\begin{equation}
    S(X,Y,Z;k,l,m) = \sum_{ \substack{u_{23} \leq X \\ u_{13} \leq Y \\ u_{12} \leq Z}}  \delta(u_{23}; k u_{12} u_{13}) \delta(u_{13}; l u_{12} u_{23} ) \delta(u_{12}; m u_{13} u_{23}).
    \label{S definition}
\end{equation}
In particular, we will prove the following lemma.
\begin{lemma}
For $X,Y,Z \geq 2$ and $k,l,m \in \Z \setminus \{0\}$ we have the bound
\begin{equation*}
\begin{split}
      S(X,Y,Z;k,l,m)
      &\ll ( klm) ^{\frac{1}{4}}XYZ[(\log X \log Y \log Z)^{-\frac{1}{2}} \\ &+ (\log X \log Y \log Z)^{-2}((\log X)^{\frac{5}{2}} + (\log Y)^{\frac{5}{2}}  + (\log Z)^{\frac{5}{2}})].
\end{split}
\end{equation*}
\label{Lemma for bound of S}
\end{lemma}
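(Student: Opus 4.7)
The plan is to rewrite each indicator $\delta(u;v)$ as a Jacobi character sum and then estimate the resulting triple sum by separating the main term (handled by Selberg-Delange) from an error controlled by the P\'olya-Vinogradov inequality. The starting identity, valid for squarefree odd $u$ coprime to $v$, is
\[
\delta(u;v) = \prod_{p\mid u}\frac{1+(v/p)}{2} = 2^{-\omega(u)}\sum_{d\mid u}(v/d).
\]
Substituting this into each of the three factors of $S$, expanding the Jacobi symbols by multiplicativity (e.g.\ $(ku_{12}u_{13}/d_{23}) = (k/d_{23})(u_{12}/d_{23})(u_{13}/d_{23})$), and writing $u_{ij} = d_{ij}e_{ij}$ with $d_{ij}\mid u_{ij}$ (so $\gcd(d_{ij},e_{ij})=1$ automatically since $u_{ij}$ is squarefree) gives an expression of the shape
\[
S = \sum_{d_{12},d_{13},d_{23}} (k/d_{23})(l/d_{13})(m/d_{12})\, R(\mathbf{d}) \prod_{ij}\Sigma_{ij}(\mathbf{d}),
\]
where $R(\mathbf{d})$ collects the six cross symbols $(d_{ab}/d_{cd})$ (to be simplified using quadratic reciprocity) and each $\Sigma_{ij}(\mathbf{d})$ is a single-variable character sum of the form $\sum_{e_{ij}}\mu^2(e_{ij})2^{-\omega(e_{ij})}\chi_{ij}(e_{ij})$, with $\chi_{ij}$ a Jacobi character whose conductor divides $4d_{ab}d_{cd}$, the product of the two remaining divisors.

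The main term is the contribution from $d_{12}=d_{13}=d_{23}=1$: all inner characters $\chi_{ij}$ are then principal, and each $\Sigma_{ij}$ is a sum to which the Selberg-Delange method, in the same form as in Lemma \ref{Selberg-Delange}, applies, yielding $\Sigma_{ij}\ll N_{ij}(\log N_{ij})^{-1/2}$. The product of the three factors produces a main contribution of size $XYZ(\log X\log Y\log Z)^{-1/2}$; the overall factor $(klm)^{1/4}$ in the statement is a crude overestimate on this piece, inherited from the bound used for the error.

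For the error, whenever at least one $d_{ij}$ is $>1$ the inner characters $\chi_{ij}$ of modulus $d_{ab}d_{cd}>1$ become non-principal, and I would apply the P\'olya-Vinogradov inequality (with partial summation to dispose of the $\mu^2\cdot 2^{-\omega}$ weight) to those $\Sigma_{ij}$'s, getting square-root cancellation in the conductor. The outer characters $(k/d_{23})$, $(l/d_{13})$, $(m/d_{12})$ are likewise non-principal in the variables $d_{23},d_{13},d_{12}$ when $k,l,m$ are non-squares, and a P\'olya-Vinogradov bound on the outer sums (balanced by Cauchy-Schwarz to symmetrise the cancellation across the three variables) should produce the factor $(klm)^{1/4}$. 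Summing over the $d_{ij}$ in dyadic ranges and incorporating the Selberg-Delange error term $N(\log N)^{-3/2}$ on the undisturbed $\Sigma_{ij}$'s then yields the secondary logarithmic contribution of the claimed shape $(\log X\log Y\log Z)^{-2}\bigl((\log X)^{5/2}+(\log Y)^{5/2}+(\log Z)^{5/2}\bigr)$.

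The principal obstacle will be the bookkeeping: six Jacobi symbols with their quadratic reciprocity signs, the various coprimality and oddness conditions that must survive the substitution $u_{ij}=d_{ij}e_{ij}$, and the delicate balance between P\'olya-Vinogradov and Selberg-Delange needed to deliver the stated exponent $(klm)^{1/4}$ rather than a worse $(klm)^{1/2}$ from naive applications, as well as the precise $(\log)^{5/2}$ rather than a larger power in the error.
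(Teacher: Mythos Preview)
Your opening decomposition is the right one: the identity $\delta(u;v)=\mu^2(u)\tau(u)^{-1}\sum_{d\mid u}(v/d)$ and the substitution $u_{ij}=d_{ij}e_{ij}$ are exactly how the paper begins, and the main term from $d_{12}=d_{13}=d_{23}=1$ is handled just as you describe.

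The gap is in the error analysis. P\'olya--Vinogradov alone cannot close the argument. When you apply it to an inner sum $\Sigma_{ij}$ over $e_{ij}$, the bound you obtain is $\ll \sqrt{d_{ab}d_{cd}}\,\log(d_{ab}d_{cd})$, where $d_{ab},d_{cd}$ are the other two divisor variables. These range up to $X,Y,Z$, so the conductor can be of size $XY$ (or $XZ$, $YZ$); the P\'olya--Vinogradov bound is then \emph{worse} than the trivial bound $Z/d_{12}$ on $\Sigma_{12}$ in the generic range, and summing over the $d_{ij}$ produces something far larger than $XYZ$. No amount of Cauchy--Schwarz symmetrisation repairs this, because the total conductor mass is simply too large.

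What the paper does instead is introduce a threshold parameter $V$ and split into regions. In any region where some $d_i>V$ and some $e_j>V$ with $i\neq j$, it applies a \emph{bilinear} Jacobi-symbol bound of Friedlander--Iwaniec type,
\[
\sum_{n\leq N}\sum_{m\leq M}\alpha_n\beta_m\Bigl(\frac{m}{n}\Bigr)\ll (N^{5/6}M+NM^{5/6})(\log NM)^{7/6},
\]
which gives cancellation simultaneously in both long variables and yields a saving of $V^{-1/6}$. In the complementary regions (all $e$'s short, or all $d$'s short), the long variables carry characters of conductor $\leq 4|klm|V^2$, and the paper applies a Siegel--Walfisz type estimate for $\sum_{n\leq x}\mu^2(n)\tau(n)^{-1}\chi(n)$ with arbitrary log-power saving; here the conductors are genuinely small and the method works. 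The parameter $V$ is then optimised as a power of $\log(XYZ)$ divided by $\sqrt{|klm|}$.

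One further correction: the factor $(klm)^{1/4}$ in the statement is a \emph{loss}, not a gain. It does not come from cancellation in the fixed characters $(k/\cdot),(l/\cdot),(m/\cdot)$ (which may well be principal if $k,l,m$ are squares); rather it arises from the conductor dependence in the Siegel--Walfisz error and from the trivial bound invoked when $|klm|$ is so large that the chosen $V$ would drop below $1$. Your proposal to extract $(klm)^{1/4}$ via P\'olya--Vinogradov on the outer $d$-sums is therefore aiming at the wrong target.
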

Now assuming this lemma and applying partial summation thrice we get the inequality
\begin{equation*}
    T(B;k,l,m) \ll |klm|^{\frac{1}{4}} (\log B)^{\frac{3}{2}}.
\end{equation*}
Applying this to (\ref{The upper bound first eq}) and summing over the $w_{ij}$ we find that
\begin{equation*}
     N_{\text{Br}}(B) \ll_n B^{\frac{3}{2}} (\log B)^{\frac{3}{2}}\sum_{v_i \leq B}\frac{\epsilon(v_i)}{v_i^{\frac{1}{4}}}.
\end{equation*}
Since all the terms are positive and $\epsilon$ is completely multiplicative we can complete the sum and write it as an Euler product. We conclude that
\begin{equation*}
     N_{\text{Br}}(B) \ll_n B^{\frac{3}{2}} (\log B)^{\frac{3}{2}}\sum_{v_i}\frac{\epsilon(v_i)}{v_i^{\frac{1}{4}}} = B^{\frac{3}{2}} (\log B)^{\frac{3}{2}} \prod_{p\mid 2n}(1 - p^{-\frac{1}{4}})^{-3}
\end{equation*}
as desired.

We will now prove Lemma \ref{Lemma for bound of S}. 

\begin{proof}
Note that $S(X,Y,Z;k,l,m)$ does not change if we permute \\ $(X,k), (Y,l), (Z,m)$. Moreover, we have the trivial inequality \begin{equation}
    S(X,Y,Z;k,l,m) \leq XYZ.
    \label{S trivial inequality}
\end{equation}
If $\max(\log X, \log Y, \log Z) \geq (\min(\log X \log Y, \log X \log Z, \log Y \log Z))^4$, \\ then we can use the trivial inequality: let us assume that $X \geq Y,Z$ so by assumption $\log X \geq (\log Y \log Z)^{4}$. We then find that
\begin{equation*}
    S(X,Y,Z;k,l,m) \ll XYZ (\log X)^{\frac{1}{2}} (\log Y \log Z)^{-2}.
    \label{S when one of X,Y,Z is larger}
\end{equation*}
In this case the lemma is true.

We now assume that \begin{equation}
     \max(\log X, \log Y, \log Z) \leq \min(\log X \log Y, \log X \log Z, \log Y \log Z)^4.
     \label{assumption for proof of S}
\end{equation}
We can rewrite $S(X,Y,Z;k,l,m)$ using the equality
\begin{equation}
    \delta(u ; v) = 1_{(u,2v) = 1}\frac{\mu^2(u)}{\tau(u)}\sum_{d \mid  u} (\frac{v}{d}).
    \label{rewrite delta}
\end{equation}
By $1_{(u,2v) = 1}$ we mean the indicator function of the set $\{u: (u,2v) = 1\}$. This equality is true since both sides are multiplicative in $u$ and it is trivial when $u$ is a prime power from the definition \eqref{Definition delta}. Then we can write e.g $u_{23} =d_1 f_1$ to get that 
\begin{equation}
\begin{split}
&S(X,Y,Z;k,l,m) 
 \\ &= \sum_{ \substack{d_1 f_1 \leq X \\ d_2 f_2 \leq Y \\ d_3 f_3 \leq Z}} \frac{\mu(2 d_1 d_2 d_3 f_1 f_2 f_3)^2}{\tau(d_1 d_2 d_3 f_1 f_2 f_3)}  (\frac{k d_2 f_2 d_3 f_3}{d_1})  (\frac{l d_1 f_1 d_3 f_3 }{d_2}) (\frac{m d_1 f_1 d_2 f_2}{d_3}).
\label{S eq1}
\end{split}
\end{equation}
Now by quadratic reciprocity and since the $d_i$ are odd the factors $$(\frac{k d_2 d_3}{d_1})  (\frac{l d_1 d_3}{d_2}) (\frac{m d_1 d_2}{d_3})$$ are Dirichlet characters modulo $k,l,m$ multiplied by something which only depends on the classes of $d_1, d_2, d_3$ modulo 4. We can thus write this as a sum of characters
\begin{equation}
    (\frac{k d_2 d_3}{d_1})  (\frac{l d_1 d_3}{d_2}) (\frac{m d_1 d_2}{d_3}) = \sum_{\chi_1} \sum_{\chi_2} \sum_{\chi_3} a_{\chi_1, \chi_2, \chi_3} \chi_1(d_1) \chi_2(d_2) \chi_3(d_3).
    \label{Sum of characters}
\end{equation}
Here $\chi_1, \chi_2, \chi_3$ respectively range over all characters modulo $4l,4k,4m$ and $a_{\chi_1, \chi_2, \chi_3}$ are some complex constants.
After applying the equality \eqref{Sum of characters} to \eqref{S eq1} and switching the sums we see that we only have to bound each term corresponding to the characters $\chi_1, \chi_2,\chi_3$ separately. The corresponding term is
\begin{equation}
    \sum_{ \substack{d_1 f_1 \leq X \\ d_2 f_2 \leq Y \\ d_3 f_3 \leq Z}} \frac{\mu(2 d_1 d_2 d_3 f_1 f_2 f_3)^2}{\tau(d_1 d_2 d_3 f_1 f_2 f_3)}  \chi_1(d_1)(\frac{f_2 f_3}{d_1}) \chi_2(d_2) (\frac{ f_1 f_3 }{d_2}) \chi_3(d_3)(\frac{ f_1 f_2}{d_3}).
    \label{Term to be bounded for upper bound}
\end{equation}
A similar sum but with only 4 variables was investigated in \cite{Friedlander2010}. We will follow their approach, for this we will need Lemma~1 and 2 from that paper.
\begin{lemma}
Let $\alpha_n, \beta_{m}$ be complex numbers supported on odd integers of absolute value $\leq 1$.
We have the inequality
\begin{equation*}
    \sum_{\substack{n \leq N \\ m  \leq M }} \alpha_n \beta_{m}( \frac{m}{n}) \leq (N^{\frac{5}{6}}M + NM^{\frac{5}{6}}) (\log NM)^{\frac{7}{6}}.
    \label{S for large sums}
\end{equation*}
\end{lemma}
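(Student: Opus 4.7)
The plan is to reduce the bilinear sum $S = \sum_{n,m}\alpha_n\beta_m(m/n)$ to a character sum estimate by Cauchy--Schwarz. Using $|\beta_m| \leq 1$ and Cauchy--Schwarz in $m$, one obtains
$$|S|^2 \leq M \sum_{m \leq M}\Big|\sum_{n \leq N}\alpha_n\Big(\frac{m}{n}\Big)\Big|^2 = M \sum_{n_1, n_2 \leq N}\alpha_{n_1}\overline{\alpha_{n_2}}\sum_{m \leq M}\Big(\frac{m}{n_1 n_2}\Big),$$
after expanding the modulus squared and swapping the order of summation. I would then split the $(n_1,n_2)$-sum according to whether $n_1 n_2$ is a perfect square. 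The \emph{diagonal} pairs, of which there are $O(N\log N)$ (since $n_1$ and $n_2$ are then forced to share the same squarefree part), give a principal Jacobi character, so each inner $m$-sum is trivially $O(M)$ and the diagonal contribution to $|S|^2$ is $O(M^2 N\log N)$.

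For the \emph{off-diagonal} part, quadratic reciprocity identifies $(m/n_1 n_2)$, as a function of $m$ coprime to $2n_1 n_2$, with a non-principal real Dirichlet character modulo $4\operatorname{rad}(n_1 n_2)$. To extract the sharp exponent $5/6$ in the final estimate I would use Burgess's subconvexity bound, or equivalently Heath--Brown's quadratic large sieve for real characters, in place of P\'olya--Vinogradov (which alone produces only a $1/2$ exponent in the balanced regime). Since $\alpha_n,\beta_m$ are not assumed to be supported on squarefree integers, I would first decompose $n = k^2 n'$ and $m = j^2 m'$ with $n',m'$ squarefree, so that $(m/n) = (m'/n')$ on the relevant coprimality locus, apply the chosen subconvex or large sieve bound at each fixed level $(k,j)$, and then sum crudely over $k$ and $j$; this last summation is what produces the additional polylogarithmic factor.

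Combining the diagonal estimate with the off-diagonal one, taking square roots, and finally applying the symmetric argument (Cauchy--Schwarz in $n$ instead of $m$) yields the two pieces $N^{5/6}M$ and $NM^{5/6}$ of the claimed upper bound, with the logarithmic exponent $7/6$ coming from the combination of the large sieve (or Burgess) log factor with the loss from the squarefree reduction.

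The principal obstacle is the sharp bookkeeping: the diagonal pair count, the subconvex saving in the character sum, and the loss from summing over the squarefull parts $(k,j)$ all compete, and it is precisely their balancing that pins down the exponents $5/6$ and $7/6$. A naive Cauchy--Schwarz plus P\'olya--Vinogradov already gives a bound of shape $N\sqrt{M}+\sqrt{N}M^{3/2}$ which is sharper than the claim in the balanced regime but loses uniformity when $M$ and $N$ are very unequal; the input provided by Burgess or the quadratic large sieve is what restores the uniform shape of the lemma.
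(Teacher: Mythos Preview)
The paper does not prove this lemma; it is quoted verbatim from Friedlander--Iwaniec \cite{Friedlander2010}, so there is no in-paper argument to compare against beyond that citation.

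Your outline---Cauchy--Schwarz, diagonal/off-diagonal split, squarefree reduction, then a character-sum input---is in the right family, and in fact if you actually feed Heath-Brown's quadratic large sieve into it you obtain a bound of shape $(MN)^{\varepsilon}(MN^{1/2}+M^{1/2}N)$, strictly \emph{stronger} than the stated $5/6$. So that route is valid and would more than suffice for the application here.

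What is wrong is your account of which input produces which exponent. You claim that Cauchy--Schwarz plus P\'olya--Vinogradov already gives $N\sqrt{M}+\sqrt{N}M^{3/2}$, ``sharper than the claim in the balanced regime but losing uniformity when $M$ and $N$ are very unequal''. The truth is exactly the opposite. In the balanced case $M=N$ your expression is $\asymp N^{2}$, i.e.\ the trivial bound $MN$, whereas the lemma gives $N^{11/6+o(1)}$: after CS in $n$ the off-diagonal contributes $N\cdot M^{2}\cdot O(M\log M)$ to $|S|^{2}$, and this swamps the diagonal. (Note also that P\'olya--Vinogradov cannot be applied to $\sum_{m}\beta_{m}\chi(m)$ with arbitrary bounded $\beta_{m}$; one must open the square first, which is why the off-diagonal carries modulus up to $M^{2}$.) Conversely, in the very lopsided regime the CS+PV bound actually \emph{beats} the $5/6$ bound. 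So the exponent $5/6$ is not ``extracted'' by upgrading PV to Burgess or Heath-Brown: Heath-Brown gives $1/2+\varepsilon$, not $5/6$, and a single Burgess bound on the off-diagonal sums (modulus up to $N^{2}$) does not balance to $5/6$ for any choice of the Burgess parameter $r$.

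In Friedlander--Iwaniec the specific constants $5/6$ and $7/6$ arise from a more elementary device: a multiplicative splitting of one of the variables into a small and a large factor, Cauchy--Schwarz with the small factor held fixed, then P\'olya--Vinogradov, with the exponents coming from optimising the splitting parameter. No subconvexity or large sieve enters. Your Heath-Brown route would prove something stronger, but the surrounding heuristics in your write-up are inverted and should be corrected.
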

\begin{lemma}
Let $\chi \bmod{q}$ be a Dirichlet character and $d$ an integer such that $(d,q)=1$, then for $x \geq 2$ and for all  $C > 0$ we have
\begin{equation*}
\begin{split}
    \sum_{\substack{n \leq x \\ (n,d) = 1}} \frac{\mu(n)^2}{\tau(n)}\chi(n) &= \delta_{\chi} c(dq) \frac{x}{\sqrt{\log x}}\{ 1 + O(\frac{(\log \log 3dq)^{\frac{3}{2}}}{\log x})\} \\ &+ O_C(\tau(d) qx (\log x)^{-C}).
\end{split}
\end{equation*}
Here $\delta_\chi = 1$ if $\chi$ is principal, $\delta_\chi = 0$ otherwise and
\begin{equation*}
    c(r) = \pi^{-\frac{1}{2}}\prod_p(1 + \frac{1}{2p})(1-\frac{1}{p})^{\frac{1}{2}} \prod_{p \mid r}(1 + \frac{1}{2p})^{-1}. 
\end{equation*}
\label{S for short sums}
\end{lemma}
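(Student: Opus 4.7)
The approach is a quantitative Selberg–Delange analysis of the Dirichlet series
\begin{equation*}
F_d(s)=\sum_{\substack{n\geq 1\\(n,d)=1}}\frac{\mu^2(n)\chi(n)}{\tau(n)}n^{-s}=\prod_{p\nmid d}\Bigl(1+\frac{\chi(p)}{2p^s}\Bigr),
\end{equation*}
where the Euler product holds because $\mu^2/\tau$ is multiplicative and supported on squarefree integers. The key local identity $(1+\tfrac{x}{2})(1-x)^{1/2}=1-\tfrac{3}{8}x^2+O(x^3)$ lets me factor $F_d(s)=L(\chi,s)^{1/2}H(s)$ with
\begin{equation*}
H(s)=\prod_{p\mid d}\bigl(1-\chi(p)p^{-s}\bigr)^{1/2}\prod_{p\nmid d}\Bigl(1+\frac{\chi(p)}{2p^s}\Bigr)\bigl(1-\chi(p)p^{-s}\bigr)^{1/2}.
\end{equation*}
The factors at $p\nmid d$ are $1+O(p^{-2\sigma})$, so $H$ is holomorphic and non-vanishing in $\operatorname{Re}(s)>1/2$, while the factors at $p\mid d$ contribute a loss bounded by a polynomial in $\tau(d)$.

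For principal $\chi$, write $L(\chi,s)=\zeta(s)\prod_{p\mid q}(1-p^{-s})$ so that $F_d(s)=\zeta(s)^{1/2}G(s)$ with $G(s)=H(s)\prod_{p\mid q}(1-p^{-s})^{1/2}$. This is exactly the Selberg–Delange setup with exponent $z=1/2$, and the branch-point singularity of $\zeta(s)^{1/2}$ at $s=1$ produces a main term of size $G(1)\Gamma(1/2)^{-1}x(\log x)^{-1/2}$. A direct Euler-factor bookkeeping identifies $G(1)/\sqrt{\pi}$ with $c(dq)$: primes $p\nmid dq$ contribute $(1+\tfrac{1}{2p})(1-\tfrac{1}{p})^{1/2}$ to $G(1)$, primes $p\mid d$ contribute $(1-\tfrac{1}{p})^{1/2}$ (using $(d,q)=1$ to force $\chi(p)=1$), and primes $p\mid q$ also contribute $(1-\tfrac{1}{p})^{1/2}$ (the $H$-factor is trivial since $\chi(p)=0$, and the $\prod_{p\mid q}$ factor supplies $(1-\tfrac{1}{p})^{1/2}$). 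This matches the stated formula $c(dq)=\pi^{-1/2}\prod_p(1+\tfrac{1}{2p})(1-\tfrac{1}{p})^{1/2}\prod_{p\mid dq}(1+\tfrac{1}{2p})^{-1}$. For non-principal $\chi$, $L(\chi,s)^{1/2}$ is holomorphic at $s=1$, so the Hankel contour captures no residue and $\delta_\chi=0$.

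The error term is produced by the standard truncated-Perron step: cut off Perron's formula at height $T=\exp(c\sqrt{\log x})$ and shift the contour into the classical zero-free region $\sigma\geq 1-c_0/\log(1+|t|)$, using the convexity bound $|L(\chi,\sigma+it)|\ll(q(1+|t|))^{(1-\sigma)/2+\varepsilon}$, which is the source of the linear $q$ factor, combined with the $\tau(d)$ loss from $H$. The horizontal segments contribute $\ll xe^{-c'\sqrt{\log x}}\ll_C x(\log x)^{-C}$, and this absorbs any possible Siegel-zero contribution by further reducing $c_0$. The secondary factor $(\log\log 3dq)^{3/2}/\log x$ in the braces comes from the two-term Hankel expansion $G(s)/G(1)=1+(s-1)G'(1)/G(1)+O((s-1)^2)$ around $s=1$, where $G'(1)/G(1)$ is a sum of local logarithmic derivatives at primes dividing $dq$ that is bounded by $\log\log 3dq$ via Mertens; the power $\tfrac{3}{2}$ is the standard refinement in Selberg–Delange error analysis.

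The hardest part is not the structural shape of the argument, which is routine Selberg–Delange, but the uniform tracking of both $d$ and $q$: the standard references treat these as fixed, so one must carry out convexity for $L(\chi,s)$, the Euler-product bounds for $H$, and the Mertens-type bound for $G'(1)/G(1)$ with fully explicit uniformity. This lemma is precisely Lemma~2 of \cite{Friedlander2010}, which I would invoke directly rather than reprove the full quantitative Selberg–Delange machinery.
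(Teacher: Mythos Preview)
Your proposal is correct and matches the paper's treatment exactly: the paper does not prove this lemma at all but quotes it verbatim as Lemma~2 of \cite{Friedlander2010}, which is precisely what you conclude you would do. Your accompanying Selberg--Delange sketch is sound and goes beyond what the paper offers, correctly identifying the factorisation $F_d(s)=L(\chi,s)^{1/2}H(s)$, the source of the constant $c(dq)$, and the origin of the $\tau(d)q$ and $(\log\log 3dq)^{3/2}$ dependencies.
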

In particular when $d =1$ and $\chi$ is principal this gives
    \begin{equation}
     \sum_{\substack{n \leq x }} \frac{\mu(n)^2}{\tau(n)} \ll x (\log x)^{-\frac{1}{2}}.
     \label{principal case}
     \end{equation}

Let now $V \geq 1$ be a parameter which will be chosen later as a negative power of $|klm|$ times a large power of $\log (XYZ)$. We will split the sum \eqref{Term to be bounded for upper bound} into different regions which we will bound separately. The regions are as follows:
\begin{enumerate}[label = (\arabic*)]
    \item The first regions are those of the form $d_i, f_j > V$ where $i \neq j$. All of these sums are analogous so we may assume that $i=1, j=2$. Certain regions will then be counted double, to use inclusion-exclusion we will thus be required to bound regions of the type $d_1, f_2 > V$ and possible conditions on $d_2, d_3, f_1, f_3.$
    \item Another region is $d_1, d_2, d_3, f_1, f_2, f_3 \leq V$.
    \item The third type of region we consider is $d_i, d_j, f_i, f_j \leq V$ and $d_k, f_k > V$ for $\{i,j,k\}= \{1,2,3\}$.
    \item The last regions are given by $d_1, d_2, d_3 \leq V$ and $f_1, f_2, f_3 \leq V$. This will count the terms in the region in (2) twice so it will have to be subtracted.
\end{enumerate}
We consider first the regions of the form $d_1, f_2 > V$ with a possible condition on $d_2, d_3, f_1, f_3.$ Now look at this region in \eqref{S eq1}, move the sum over $d_1, f_2$ to the inside and apply trivial bounds to the terms which only depend on $d_2, d_3, f_1, f_3$, this also removes all the coprimality conditions except for $(d_1,f_2) = 1$. After doing this we find that this region is bounded by
\begin{equation*}
     \ll \sum_{f_1 \leq X V^{-1}} \sum_{d_2 \leq Y V^{-1}} \sum_{d_3 f_3 \leq Z} |\sum_{V < d_1 \leq \frac{X}{f_1}} \sum_{V < f_2 \leq \frac{Y}{d_2}} \frac{\mu(2d_1 f_2)^2}{ \tau(d_1 f_2)} \chi_1(d_1)(\frac{f_2 f_3}{d_1})|.
\end{equation*} 

Now applying Lemma \ref{S for large sums} to the sum over $d_1$ and $f_2$ we find that the above sum is bounded by
\begin{equation}
\begin{split}
    &\ll \sum_{f_1 \leq XV^{-1}} \sum_{d_2 \leq YV^{-1}} \sum_{d_3 f_3 \leq Z} ((\frac{X}{f_1})^{\frac{5}{6}}\frac{Y}{d_2} + \frac{X}{f_1} (\frac{Y}{d_2})^{\frac{5}{6}})(\log X Y)^\frac{7}{6}  \\ &\ll XYZV^{- \frac{1}{6}}(\log X \log Y)^{\frac{13}{6}} \log Z \ll XYZV^{- \frac{1}{6}} (\log X \log Y \log Z)^{\frac{13}{6}}.
    \label{S eqn 1}
\end{split}
\end{equation}

The second region $d_1,d_2,d_3,f_1,f_2,f_3 \leq V$ is trivially bounded by 
\begin{equation}
    V^6.
    \label{Six variables smaller than V}
\end{equation} 

We can bound the regions $d_i, d_j, f_i, f_j \leq V$ and $d_k, f_k > V$ for $\{i,j,k\}= \{1,2,3\}$ via trivial bounds by \begin{equation}
     \ll V^4(X \log X +Y \log Y + Z \log Z).
    \label{Three variables smaller than V}
\end{equation} 

There remain two regions to be bounded, $f_1,f_2,f_3 \leq V$ and $d_1, d_2,d_3 \leq V$. These are analogous but the first one is slightly more involved due to the presence of the characters $\chi_i$ so we will only explain the treatment of that one here. The relevant sum is
\begin{equation*}
    \sum_{ \substack{f_1,f_2,f_3 \leq V}}  \sum_{\substack{d_1 \leq \frac{X}{f_1} \\ d_2 \leq \frac{Y}{f_2} \\ d_3 \leq \frac{Z}{f_3}}} \frac{\mu(d_1 d_2 d_3 f_1 f_2 f_3)^2}{\tau(d_1 d_2 d_3 f_1 f_2 f_3)}  \chi_1(d_1)(\frac{f_2 f_3}{d_1})  \chi_2(d_2)(\frac{ f_1 f_3 }{d_2})\chi_3(d_3) (\frac{f_1 f_2}{d_3}).
\end{equation*}
We will now estimate the inner sum depending on the values of $f_1,f_2,f_3$. Note that since the terms are zero unless $f_1,f_2,f_3$ are pairwise coprime squarefree integers, $f_i f_j$ is a square for $i \neq j$ only if $1 = f_i = f_j$. The first case is $1 = f_1 = f_2 = f_3$, by applying $\sum_{\substack{n \leq x}} \mu(n)^2/\tau(n) \ll x (\log x)^{-\frac{1}{2}}$ three times to the sums over $d_1,d_2, d_3$ we see that the contribution of this part is \begin{equation}
    \ll XYZ (\log X \log Y \log Z)^{-\frac{1}{2}}.
    \label{S main term}
\end{equation} The second part is when exactly two of $f_1,f_2,f_3$ are equal to 1, we may assume that $ f_2 = f_3 = 1$ by symmetry. Now by applying Lemma \ref{S for short sums} to the sum over $d_2$ and using that $\chi_2 (\frac{f_1}{\_})$ is a non-principal character of conductor at most $4 |l| f_1$ we get a bound 
\begin{equation*}
\begin{split}
    &\ll_C |l|\sum_{f_1 \leq C} \frac{f_1}{\tau(f_1)} \sum_{\substack{d_1 \leq \frac{X}{f_1}}} \sum_{d_3 \leq Z} \frac{\mu(d_1 d_3 f_1 )^2}{\tau(d_1 d_3)} \tau(d_1 d_3)   Y (\log Y)^{-C}  \\ &\ll |l| XYZ\sum_{f_1 \leq C} \frac{1}{\tau(f_1)} \ll |l| V XYZ (\log Y)^{-C}
\end{split}
\end{equation*}
for all $C > 0$.
By instead applying Lemma \ref{S for short sums} to the sum over $d_3$ we get a similar bound with $(l,Y)$ and $(m,Z)$ switched. Bounding by their geometric mean and finding similar contributions for the other situations when two of $f_1, f_2, f_3$ are equal to 1 we get a total bound for this part of 
\begin{equation}
\begin{split}
        \ll_C VXYZ( \sqrt{\mid kl\mid } (\log X \log Y)^{- \frac{C}{2}} &+ \sqrt{\mid km\mid } (\log X \log Z)^{- \frac{C}{2}} \\ &+\sqrt{\mid lm\mid } (\log Y \log Z)^{- \frac{C}{2}}).
\end{split}
\label{S eqn 2}
\end{equation}
The last part is when none of the $f_1 f_2, f_1 f_3, f_2 f_3$ are equal to 1. We apply Lemma \ref{S for short sums} to the sum over $d_1$ where we use that $\chi_1(\frac{f_2 f_3}{\_})$ is a non-principal character of conductor at most $4|k| f_2 f_3$. For the other sums use trivial bounds to get
\begin{equation*}
\begin{split}
    &\ll_C |k| \sum_{ \substack{f_1,f_2,f_3 \leq V}} \frac{f_2 f_3}{\tau(f_1 f_2 f_3)} \sum_{\substack{d_2 \leq \frac{Y}{f_2} \\ d_3 \leq \frac{Z}{f_3}}} \frac{\mu(d_2 d_3 f_1 f_2 f_3)^2}{\tau(d_2 d_3)} \tau(f_1 d_2 d_3) X (\log X)^{-C} \\ &\ll V^3 |k|XYZ(\log X)^{-C}.
\end{split}
\end{equation*}
By applying Lemma \ref{S for short sums} instead to the sums over $d_2,d_3$ we get similar bounds so we may bound the sum by their geometric mean
\begin{equation}
    \ll_C |klm|^{\frac{1}{3}}V^3XYZ(\log X \log Y \log Z)^{-\frac{C}{3}}.
    \label{S eqn 3}
\end{equation}
We can now take for example $V = \frac{(\log X \log Y \log Z)^{20}}{\sqrt{\mid klm\mid }}$ and $C = 300$, we have assumed that $V \geq 1$ but if $\frac{(\log X \log Y \log Z)^{20}}{\sqrt{\mid klm\mid }} \leq 1$ then we can use the trivial inequality (\ref{S trivial inequality}) to find that 
\begin{equation*}
        S(X,Y,Z;k,l,m) \ll |klm|^{\frac{1}{4}}XYZ (\log X \log Y \log Z)^{-20}.
    \label{S when klm is large}
\end{equation*}
Using this choice we get the desired bound for \eqref{S eqn 1}, \eqref{Six variables smaller than V}, \eqref{S main term} and \eqref{S eqn 3}.
If we use the assumption \eqref{assumption for proof of S} we also get correct bounds for \eqref{Three variables smaller than V}, \eqref{S eqn 2}.
\end{proof}
\bibliographystyle{plain}
\bibliography{refs}
\end{document}